\newtheorem{thm}{Theorem}
\newtheorem{lem}{Lemma}
\theoremstyle{definition}
\def\-{\mbox{--}}
\newtheorem{obs}{Observation}
\newtheorem{definition}{Definition}
\begin{document}
\title{Note on the upper bound of the rainbow index of a graph\Large\bf \footnote{Supported by NSFC No.11371205,
``973" program No.2013CB834204, and PCSIRT.}}
\author{\small Qingqiong~Cai, Xueliang~Li, Yan~Zhao\\
\small Center for Combinatorics and LPMC-TJKLC\\
\small Nankai University, Tianjin 300071, China\\
\small cqqnjnu620@163.com; lxl@nankai.edu.cn; zhaoyan2010@mail.nankai.edu.cn}
\date{}
\maketitle
\begin{abstract}
A path in an edge-colored graph $G$, where adjacent edges may be
colored the same, is a rainbow path if every two edges of it receive
distinct colors. The rainbow connection number of a connected graph
$G$, denoted by $rc(G)$, is the minimum number of colors that are
needed to color the edges of $G$ such that there is a rainbow path
connecting every two vertices of $G$. Similarly, a tree in $G$ is a
rainbow~tree if no two edges of it receive the same color. The
minimum number of colors that are needed in an edge-coloring of $G$
such that there is a rainbow tree connecting $S$ for each $k$-subset
$S$ of $V(G)$ is called the $k$-rainbow index of $G$, denoted by
$rx_k(G)$, where $k$ is an integer such that $2\leq k\leq n$.
Chakraborty et al. got the following result: For every $\epsilon>
0$, a connected graph with minimum degree at least $\epsilon n$ has
bounded rainbow connection number, where the bound depends only on
$\epsilon$. Krivelevich and Yuster proved that if $G$ has $n$
vertices and the minimum degree $\delta(G)$ then
$rc(G)<20n/\delta(G)$. This bound was later improved to
$3n/(\delta(G)+1)+3$ by Chandran et al. Since $rc(G)=rx_2(G)$, a
natural problem arises: for a general $k$ determining the true
behavior of $rx_k(G)$ as a function of the minimum degree
$\delta(G)$. In this paper, we give upper bounds of $rx_k(G)$ in
terms of the minimum degree $\delta(G)$ in different ways, namely,
via Szemer\'{e}di's Regularity Lemma, connected $2$-step dominating
sets, connected $(k-1)$-dominating sets and $k$-dominating sets of
$G$.

{\flushleft\bf Keywords}: rainbow path, rainbow tree, dominating
set, $k$-rainbow index.

{\flushleft\bf AMS subject classification 2010}: 05C15, 05C35, 05C40.
\end{abstract}

\section{Introduction}
All graphs considered in this paper are simple, finite and
undirected. We follow the terminology and notation of Bondy and
Murty \cite{Bondy}. Let $G=(V,E)$ be a nontrivial connected graph
with an edge-coloring $c: E\rightarrow \{1,2,\ldots,\ell\},$ $\ell
\in \mathbb{N}$, where adjacent edges may be colored the same. A
path of $G$ is a \emph{rainbow path} if no two edges of the path are
colored the same. The graph $G$ is \emph{rainbow connected} if for
every two vertices of $G$, there is a rainbow path connecting them.
The minimum number of colors for which there is an edge-coloring of
$G$ such that $G$ is rainbow connected is called the \emph{rainbow
connection number}, denoted by $rc(G)$. These concepts were
introduced by Chartrand et al. in \cite{Chartrand1}. Since it is
almost impossible to give the precise value of the rainbow
connection number for an arbitrary graph, many bounds for the
rainbow connection number have been given in terms of other graph
parameters, such as minimum degree and connectivity, etc. The
interested readers can see \cite{Caro,Chartrand1,Krivelevich and
Yuster,Sun,LiSun2}.

In \cite{Zhang}, Chartrand et al. generalized the concept of rainbow
path to rainbow tree. A tree $T$ in $G$ is a $rainbow~tree$ if no
two edges of $T$ receive the same color. For $S\subseteq V$, a
$rainbow\ $S-$tree$ is a rainbow tree connecting the vertices of
$S$. Given a fixed integer $k$  with $2\leq k \leq n$, an
edge-coloring $c$ of $G$ is called a $k$-$rainbow~coloring$ if for
every set $S$ of $k$ vertices in $G$, there exists a rainbow
$S$-tree. In this case, we called $G$ $k$-$rainbow~connected$. The
minimum number of colors that are needed in a $k$-$rainbow~coloring$
of $G$ is called the $k$-$rainbow~index$, denoted by $rx_k(G)$.
Clearly, when $k=2$, $rx_2(G)$ is exactly the rainbow connection
number $rc(G)$. For every connected graph $G$ of order $n$, it is
easy to see that $rc(G)\leq rx_3(G)\leq \cdots \leq rx_n(G)$. We
refer to \cite{Cai,Cai1,Chen,Liu} for more details about the
$k$-rainbow index.

Not surprisingly, as the minimum degree increases, the graph would
become more dense and therefore the rainbow connection number and
rainbow index would decrease. In \cite{Chakraborty},
\cite{Krivelevich and Yuster} and \cite{Chandran}, the authors
studied the relationship between the minimum degree $\delta(G)$ and
the rainbow connection number $rc(G)$:

\begin{thm}[\cite{Chakraborty}]\label{Chakraborty}
For every $\epsilon> 0$, a connected graph with minimum degree at
least $\epsilon n$ has bounded rainbow connection number, where the
bound depends only on $\epsilon$.
\end{thm}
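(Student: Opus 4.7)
The plan is to use Szemer\'{e}di's Regularity Lemma, consistent with the paper's own methodology. The idea is that the minimum-degree assumption $\delta(G) \geq \epsilon n$ forces the ``reduced graph'' on the clusters of a regular partition to be dense, hence to have bounded diameter, and that one can color $G$ using essentially one color per regular pair so as to rainbow-connect every two vertices along a short lifted walk.

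First I would fix an auxiliary constant $\eta = \eta(\epsilon) \ll \epsilon$ and apply the Regularity Lemma to $G$, obtaining a partition $V_0 \cup V_1 \cup \cdots \cup V_t$ with $|V_0| < \eta n$, $|V_1| = \cdots = |V_t| = m$, at most $\eta t^2$ irregular pairs, and $t$ bounded by a function $T(\epsilon)$. I would then form the reduced graph $R$ on $V_1, \ldots, V_t$ by joining $V_i$ to $V_j$ whenever $(V_i, V_j)$ is $\eta$-regular with density at least $\epsilon/2$. Double counting, together with $\delta(G) \geq \epsilon n$, shows that $R$ has minimum degree at least $(\epsilon/2 - O(\eta))t$, so $R$ is connected and has diameter bounded by a constant $D = D(\epsilon)$.

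Next I would assign a distinct color to each edge of $R$; color each $G$-edge between a regular, dense pair $(V_i, V_j)$ by the color of $V_iV_j \in E(R)$; and reserve a constant-sized extra block of colors for the remaining edges, namely the edges inside a cluster, the edges incident to $V_0$, and the edges in irregular or sparse pairs. The total number of colors is at most $\binom{t}{2} + O(1)$, a function of $\epsilon$ alone. To verify rainbow connectedness between arbitrary $u \in V_i$ and $v \in V_j$, I would take a shortest $V_i$--$V_j$ path in $R$ of length $k \leq D$ and lift it step by step: by regularity, all but a small fraction of vertices of each cluster have many neighbors in the next cluster, so starting from $u$ one can greedily choose typical intermediate vertices until reaching $v$. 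The edges of the lifted path carry pairwise distinct colors because the path in $R$ is simple.

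The main obstacle I expect is handling the atypical vertices left uncontrolled by the Regularity Lemma: the exceptional set $V_0$, the few vertices of each cluster that violate the regular-pair degree guarantee, and the endpoints $u, v$ themselves if they happen to be exceptional in their clusters. The standard remedy is to allow two or three steps per reduced-graph edge, spending the first step or two to escape into the typical part of the next cluster, and to absorb $V_0$ by pre-coloring the edges incident to it with a few extra colors. Each remedy only multiplies the color count by a constant depending on $\epsilon$, so the final bound remains a function of $\epsilon$ alone, as required.
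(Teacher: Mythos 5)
Your overall strategy (Regularity Lemma, reduced graph, lifted short paths, plus a reserve block of colors) is indeed the one behind Chakraborty et al.'s result and behind this paper's generalization in Theorem \ref{C}, but the execution has two genuine gaps. First, the reduced graph $R$ is not connected merely because it has minimum degree $\Omega(t)$: a graph on $t$ vertices with minimum degree $ct$, $c<1/2$, can split into up to $1/c$ components, and the $G$-edges joining the corresponding unions of clusters may all lie in sparse or irregular pairs or be incident to $V_0$, hence receive no reduced-graph color. (Even the minimum-degree claim for $R$ needs care with the vanilla Regularity Lemma, since the $\eta t^2$ irregular pairs may all concentrate on one cluster; this is why one uses a degree-form version such as Lemma \ref{regular}.) Connectivity of $G$ must be invoked separately; the paper does this by using $diam(G)\leq 3/\epsilon$ to build a tree $T$ on at most $\ell\cdot diam(G)$ vertices meeting every cluster and giving its $O_{\epsilon}(1)$ edges fresh colors. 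Without such a device your argument provides no rainbow path between vertices in different components of $R$.

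Second, and more seriously, the coloring ``one distinct color per edge of $R$'' cannot rainbow-connect two vertices $u,v$ lying in the \emph{same} cluster $V_i$: any lifted walk that leaves $V_i$ and returns must either reuse a reduced edge (hence a color) or traverse a closed walk in $R$ with pairwise distinct edges through $V_i$, which you neither construct nor can guarantee from minimum degree alone. The same defect undermines your remedy for atypical vertices: ``two or three steps per reduced-graph edge'' means two or three consecutive $G$-edges inside a single pair $(V_i,V_j)$, all of which carry the same color, so the lifted path is not rainbow. This is precisely why the actual proof is probabilistic: one first shows (as in Lemma \ref{3.8}) that any two vertices of the same cluster are joined by more than $c\log n$ edge-disjoint paths of length at most four, then colors all edges uniformly at random from a constant palette so that with positive probability every such pair has a rainbow path (as in Lemma \ref{3.6}), and finally routes an arbitrary pair of vertices through cluster representatives on the tree $T$. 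Your deterministic one-color-per-pair scheme would have to be replaced by something of this kind before the bound $O_{\epsilon}(1)$ on $rc(G)$ actually follows.
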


\begin{thm}[\cite{Krivelevich and Yuster}]\label{Yuster}
If $G$ has $n$ vertices and the minimum degree $\delta(G)$ then $rc(G)<20n/\delta(G)$.
\end{thm}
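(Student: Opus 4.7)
My plan is to build a small ``skeleton'' $H\subseteq G$ of size $O(n/\delta)$ that dominates $V(G)$, to color its edges rainbow using a spanning tree, and to extend the coloring to the remaining vertices with a bounded secondary palette so that every pair of vertices is joined by a rainbow attachment--tree--attachment path.

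First I would produce the skeleton by a greedy domination step: pick vertices $v_1,v_2,\ldots$ one after another, each lying outside $N[\{v_1,\ldots,v_{i-1}\}]$, so that every new choice absorbs at least $\delta+1$ fresh vertices into its closed neighborhood. The procedure halts after at most $n/(\delta+1)$ rounds and yields a dominating set $D$ of that size. Using connectivity of $G$, any two dominators can be joined by a short path through their dominated regions; gluing such bridges together gives a connected subgraph $H$ with $|V(H)|=O(n/\delta)$ that still dominates $V(G)$.

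Next I would color in two layers. Fix a spanning tree $T$ of $H$ and assign each of its $|V(H)|-1$ edges its own private color, so that any two vertices of $H$ are already rainbow-connected through $T$. For every $u\in V(G)\setminus V(H)$ pick an attachment edge $e_u=uw_u$ with $w_u\in V(H)$, and color the family $\{e_u\}$ from a secondary palette so that at each skeleton vertex $w$ the attachment colors incident with $w$ are pairwise distinct and differ from the colors of the $T$-edges at $w$. Because the $T$-degree of $w$ is bounded and the number of attachments at $w$ is controlled by the size of its dominated region, a constant-size secondary palette suffices, and the overall color count is at most $20n/\delta$.

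To verify the rainbow property, take $x,y\in V(G)$ with anchors $x',y'\in V(H)$: walk $x\to x'$ along $e_x$, traverse the unique $T$-path from $x'$ to $y'$, and descend $y'\to y$ along $e_y$. The $T$-segment uses primary colors, each exactly once, while the two attachment edges use only secondary colors chosen to avoid the adjacent $T$-colors and each other, so the concatenation is rainbow. The main obstacle is the attachment stage: coordinating the choice of edges $e_u$ and their secondary colors so that \emph{every} pair $(x,y)$ produces a legal concatenation. I would handle this by a careful local-conflict count at each skeleton vertex, or failing that by a random coloring tightened through the Lov\'asz Local Lemma, with the resulting multiplicative constants absorbed into the factor $20$.
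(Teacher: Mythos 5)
There is a genuine gap, and it sits exactly where you flag ``the main obstacle'': the attachment stage cannot work as designed. You give each vertex $u\notin V(H)$ a \emph{single} attachment edge $e_u$ and a secondary color, and you need $c(e_x)\neq c(e_y)$ for essentially every pair $x,y$ of non-skeleton vertices, since the only guaranteed rainbow route between them is $e_x$, the $T$-path, $e_y$. Local distinctness at each skeleton vertex does nothing for anchors that are far apart, and the condition you actually need is global: a proper coloring of the complete graph on the $n-O(n/\delta)$ outside vertices, which forces a palette of size $\Theta(n)$, not a constant. The Lov\'asz Local Lemma cannot rescue this either --- the bad event ``$c(e_x)=c(e_y)$'' for a fixed pair has probability $1/t$ but degree about $2n$ in the dependency graph, so LLL demands $t=\Omega(n)$. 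The fix, which is the heart of the Krivelevich--Yuster argument, is to give every outside vertex \emph{two} edge-disjoint routes into the skeleton carrying disjoint color sets: split $G$ into two edge-disjoint spanning subgraphs $G_1,G_2$, each of minimum degree at least $\lfloor(\delta-1)/2\rfloor$ (Lemma~\ref{two spanning subgraphs}), build a connected $2$-step dominating set in each, and color the $G_1$-legs and $G_2$-legs with two fixed disjoint pairs of colors. Then for any pair $x,y$ one routes $x$ through its $G_1$-attachment and $y$ through its $G_2$-attachment, and no clash can occur. This ``two roles per vertex'' device (equivalently, the $X/Y/Z$ bipartition trick behind Theorem~\ref{thm5}) is precisely what your scheme is missing, and it is what Theorem~\ref{thm2} of this paper generalizes to $k$ roles.

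A second, smaller error: your greedy does not produce a dominating set of size $n/(\delta+1)$. Choosing $v_i$ merely outside $N[\{v_1,\dots,v_{i-1}\}]$ does not make the closed neighborhoods disjoint (the neighbors of $v_i$ may already be dominated), and indeed there are graphs of minimum degree $\delta$ whose domination number is $\Theta(n\ln\delta/\delta)$, so no argument can give $n/(\delta+1)$ for ordinary domination. What is true, and what the proof of Claim~3 in Theorem~\ref{thm2} uses, is that a \emph{$2$-step} dominating set of size at most $n/(\delta+1)$ exists: grow $D$ by repeatedly adding a vertex at distance exactly $3$ from the current $D$, so that $|D\cup N^1(D)|$ increases by at least $\delta+1$ at every step. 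One then pays an extra constant factor (at most $5$ per component, as in Claim~4) to make it connected. With $2$-step domination the attachment routes have length up to $2$, which is why the two color classes per spanning subgraph, rather than one, appear in the final count.
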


\begin{thm}[\cite{Chandran}]
For every connected graph $G$ of order $n$ and minimum degree $\delta$,
$rc(G)\leq3n/(\delta+1)+3$.
Moreover, for every $\delta\geq2$, there exist infinitely many graphs $G$ such that $rc(G)\geq
3(n-2) /(\delta+1)-1$.
\end{thm}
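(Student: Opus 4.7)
The plan is to attack the upper bound via a small connected dominating set together with a clever $3$-color extension on the edges leaving it, and to establish the lower bound via an explicit chain-of-cliques construction.

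For the upper bound, my first step is to produce a connected dominating set $D\subseteq V(G)$ whose size is bounded by roughly $\frac{3n}{\delta+1}$. This can be done by a greedy BFS-leaf-pruning argument exploiting the minimum degree, in the spirit of Caro, West and Yuster. I would then fix a spanning tree $T$ of $G[D]$ and color its $|D|-1$ edges with pairwise distinct ``tree colors'' $1,\ldots,|D|-1$; this alone makes any pair of vertices of $D$ rainbow-connected. Only three further colors $\alpha,\beta,\gamma$ are allowed, and together with the tree colors the total count already fits under the target $\frac{3n}{\delta+1}+3$ with room to spare.

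The step I expect to be the main obstacle is the design of the $3$-coloring on the edges from $V\setminus D$ to $D$. For every pair $u,v\in V\setminus D$, the candidate rainbow path has the form $u\to f(u)\to\cdots\to f(v)\to v$, using the rainbow tree $T$ in the middle, and it succeeds whenever the two pendant edges $uf(u),vf(v)$ carry distinct colors. If each vertex of $V\setminus D$ has at least two neighbors in $D$, one can stockpile two differently colored pendant edges per vertex and always select a matching pair. The delicate case is a vertex of $V\setminus D$ with only one neighbor in $D$: here one must either re-route through a two-step path inside $V\setminus D$, or properly $3$-color an auxiliary conflict graph on the ``singleton'' vertices (which has bounded maximum degree, hence is $3$-colorable greedily). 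The additive $+3$ in the bound is exactly the slack needed to absorb these singletons and the small values of $\delta$.

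For the lower bound, I would, for each $t\geq 1$, construct a graph $G_t$ as a chain of $t$ copies of $K_{\delta+1}$ joined consecutively by short path-like segments, with two pendant-style vertices attached at the two extreme ends. A direct vertex count gives $|V(G_t)|=n$ with $n-2$ divisible by $\delta+1$, $\delta(G_t)=\delta$, and any shortest path between the two extreme pendants must traverse the full chain, yielding length at least $\frac{3(n-2)}{\delta+1}-1$. The trivial inequality $rc(G)\geq \mathrm{diam}(G)$, which holds because a rainbow path of length $d$ requires $d$ distinct colors, then delivers the claimed lower bound on $rc(G_t)$. Letting $t$ range over $\mathbb{N}$ produces the promised infinite family.
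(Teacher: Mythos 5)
This statement is quoted from \cite{Chandran}; the present paper gives no proof of it, so your proposal can only be measured against the known argument and against the proof of Theorem \ref{thm8}(2) here, which generalizes exactly the mechanism your outline is missing.

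Your plan is sound up to the step you yourself identify as the main obstacle, and there it stops being a proof. Of the two escape routes you offer for a vertex of $V\setminus D$ with a single neighbour in $D$, the second (greedily $3$-colouring a bounded-degree conflict graph on the ``singletons'') fails: any two such singletons conflict, because the unique pendant edge of each appears on every candidate rainbow path of your prescribed form between them, so the conflict graph is complete on the singleton set and ceases to be $3$-colourable as soon as there are four singletons. The first route (re-routing through $V\setminus D$) is the correct one, but to carry it out you need two things you have not supplied: (i) the dominating set must be a connected \emph{two-way} dominating set, so that every vertex of $\overline{D}$ lacking two legs has a neighbour inside $\overline{D}$ to re-route through --- this is where $\delta\geq 2$ enters, the case $\delta=1$ being trivial since then $3n/(\delta+1)+3\geq n-1\geq rc(G)$; and (ii) a global assignment of the three colours $\alpha,\beta,\gamma$ under which the re-routed two-edge path is itself rainbow and avoids the colour of the other endpoint's pendant edge. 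The standard device, and precisely the one used in the proof of Theorem \ref{thm8}(2) of this paper, is to take a spanning forest of the non-singleton components of $G[\overline{D}]$, bipartition it into $X\cup Y$, colour a leg of each vertex of $X$ with $\alpha$, a leg of each vertex of $Y$ with $\beta$, every $X$--$Y$ edge inside $\overline{D}$ with $\gamma$, and give isolated vertices of $G[\overline{D}]$ one leg of each of $\alpha,\beta$; two vertices stranded on the same side are then reconciled by sending one of them across a $\gamma$-edge to the other side. Without this (or an equivalent) explicit scheme the upper bound is not established. Your lower-bound sketch (a chain of cliques plus $rc(G)\geq diam(G)$) is the standard construction and is fine modulo the routine bookkeeping needed to make each block of $\delta+1$ vertices contribute $3$ to the diameter.
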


Since $rc(G)$ is the case of $rx_k(G)$ for $k=2$, a natural problem
arises: for a general $k$ determining the true behavior of $rx_k(G)$
as a function of the minimum degree $\delta(G)$. In this paper, we
focus on this problem and obtain some upper bounds for $rx_k(G)$ in
terms of $\delta(G)$ in different ways, namely, via Szemer\'{e}di's
Regularity Lemma, connected $2$-step dominating sets, connected
$(k-1)$-dominating sets and $k$-dominating sets of $G$. The main
idea is similar to those of \cite{Chakraborty, Krivelevich and
Yuster, Chandran}. However, the proofs have their technical details
and the results are meaningful.

\section{Preliminaries}

For a graph $G$, we use $V(G)$, $E(G)$ and $\delta(G)$ to denote its
vertex set, edge set and minimum degree, respectively. For
$D\subseteq V(G)$, let $\overline{D}=V(G)\setminus D$, $|D|$ be the
number of vertices in $D$, and $G[D]$ be the subgraph of $G$ induced
by $D$. For two nonempty disjoint vertex subsets $X$ and $Y$ of a
graph $G$, let $E(X, Y)$ denote the set of edges of $G$ between $X$
and $Y$.

\begin{definition}
The distance between two vertices $u$ and $v$ in $G$, denoted by
$d(u,v)$, is the length of a shortest path between them in $G$. The
diameter of $G$, denoted by $diam(G)$, is the maximum distance
between every pair of vertices in $G$. The distance between a vertex
$v$ and a set $D\subseteq V(G)$ is $d(v,D):=\min\{d(v,u): u\in D\}$.
For a positive integer $k$, the $k$-step neighborhood of a set
$D\subseteq V(G)$ is $N^k(D):=\{x\in V(G): d(x,D)=k\}$. The distance
between two sets $X,Y\subseteq V(G)$ is $d(X,Y):=\min\{d(x,y): x\in
X, y\in Y\}$.
\end{definition}

\begin{definition}
The $Steiner~distance$ $d(S)$ of a set $S$ of vertices in $G$ is the
minimum size of a tree in $G$ containing $S$. Such a tree is called
a $Steiner$ $S$-$tree$ or simply a $Steiner~tree$. The
$k$-$Steiner~diameter$ $sdiam_k(G)$ of $G$ is the maximum Steiner
distance of $S$ among all sets $S$ with $k$ vertices in $G$.
\end{definition}

It is easy to get a simple upper bound and lower bound for $rx_k(G)$.

\begin{obs}[\cite{Zhang}]\label{obs1}
For every connected graph $G$ of order $n\geq 3$ and each integer $k$ with $3\leq k\leq n$,
$k-1\leq sdiam_k(G)\leq rx_k(G)\leq n-1$.
\end{obs}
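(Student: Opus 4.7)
The plan is to verify the three inequalities separately, since each follows in a short step directly from the definitions.

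For the lower bound $k-1\le sdiam_k(G)$, I would pick any set $S\subseteq V(G)$ with $|S|=k$ and observe that any tree containing $S$ has at least $k$ vertices and therefore at least $k-1$ edges; hence $d(S)\ge k-1$ for every such $S$, so the maximum over all $k$-subsets, namely $sdiam_k(G)$, is also at least $k-1$.

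For the middle inequality $sdiam_k(G)\le rx_k(G)$, the idea is that any rainbow $S$-tree is, in particular, a tree containing $S$, so it has at least $d(S)$ edges, and these edges must all carry distinct colors. Concretely, I would fix an optimal $k$-rainbow coloring $c$ of $G$ using $rx_k(G)$ colors, choose $S$ realizing $d(S)=sdiam_k(G)$, and invoke the definition of $k$-rainbow coloring to produce a rainbow $S$-tree under $c$; the number of colors appearing on this tree, and hence used by $c$ in total, is at least $sdiam_k(G)$.

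For the upper bound $rx_k(G)\le n-1$, I would start from any spanning tree $T$ of $G$ (which exists since $G$ is connected), assign $n-1$ pairwise distinct colors to the edges of $T$, and color the remaining edges of $G$ arbitrarily (say, reusing existing colors). For every $k$-subset $S\subseteq V(G)$, the minimal subtree of $T$ spanning $S$ uses only edges of $T$, which are all distinctly colored, and is therefore a rainbow $S$-tree. This exhibits a $k$-rainbow coloring with $n-1$ colors, giving the stated upper bound.

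None of these three steps is difficult; the entire observation is essentially a direct unpacking of the definitions of $sdiam_k(G)$ and $rx_k(G)$, combined with the elementary fact that a tree on $k$ vertices has exactly $k-1$ edges. I therefore do not anticipate any substantive obstacle; the only care required is to match the ``size of a tree'' in the definition of Steiner distance (edges) with the number of colors in a rainbow tree.
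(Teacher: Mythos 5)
Your proof is correct, and since the paper states this observation without proof (citing Chartrand, Okamoto and Zhang), your argument is exactly the standard one that the reference supplies: count edges of a Steiner tree for the lower bounds and rainbow-color a spanning tree for the upper bound. No issues.
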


\begin{definition}
Given a graph $G$ and a positive integer $k$,
a set $D\subseteq V(G)$ is called a \emph{k-step dominating set} of $G$,
if every vertex in $G$ is at a distance at most $k$ from $D$, i.e. $V(G)=D\cup(\cup_{i=1}^{k}N^i(D))$.
Further, if $G[D]$ is connected,
we call $D$ a {\it connected $k$-step dominating set} of $G$.
The {\it connected $k$-step domination number} $\gamma_{c}^k(G)$ is the number of
vertices in a minimum connected $k$-step dominating set of $G$.
When $k=1$, we may omit the qualifier ``1-step" in the above
names and the superscript $1$ in the notation.
\end{definition}

\begin{definition}
Given a graph $G$ and a positive integer $k$,
a set $D\subseteq V(G)$ is called a {\it $k$-dominating set} of $G$, if every vertex in
$\overline{D}$ is adjacent to at least $k$ distinct vertices of $D$.
Furthermore, if $G[D]$ is connected, we call $D$ a {\it connected $k$-dominating set}.
The connected $k$-domination number $\gamma_{k}^c(G)$ is the minimum cardinality among all the
connected $k$-dominating sets of $G$.
\end{definition}

\begin{definition}
Given a graph $G$ and a positive integer $k$, a dominating set $D$ of $G$ is called
a {\it $k$-way dominating set} if $d(v)\geq k$ for every vertex
$v\in \overline{D}$. In addition, if $G[D]$ is connected, we call
$D$ a {\it connected $k$-way dominating set}.
\end{definition}

\begin{definition}
Let $G$ be a graph and $D\subseteq V(G)$. For $v\in N^1(D)$, its
neighbors in $D$ are called {\it foots} of $v$, and the
corresponding edges are called {\it legs} of $v$.
\end{definition}

\begin{definition}
 An edge-colored graph is {\it rainbow} if no two edges in the graph share the same color.
\end{definition}

\section{Our results}

In this section, we will deduce some upper bounds of $rx_k(G)$ in
terms of the minimum degree $\delta(G)$ in different ways, namely,
via Szemer\'{e}di's Regularity Lemma, connected $2$-step dominating
sets, connected $(k-1)$-dominating sets and $k$-dominating sets of
$G$.

\subsection{An upper bound via Szemer\'{e}di's Regularity Lemma}

In this subsection, we will investigate the $k$-rainbow index of a
graph with the aid of Szemer\'{e}di's Regularity Lemma, and obtain
our first upper bound.

\begin{thm}\label{C}
For every $\epsilon >0$ and every fixed positive integer $k$, there
is a constant $C = C(\epsilon,k)$ such that if $G$ is a connected
graph with $n$ vertices and minimum degree at least $\epsilon n$,
then $rx_k(G)\leq C$.
\end{thm}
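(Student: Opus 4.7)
The plan is to adapt the Regularity-Lemma argument of Chakraborty et al.\ (who handled $k=2$) to arbitrary $k$. Fix a small $\epsilon'>0$ (depending on $\epsilon$ and $k$) and a density threshold $d=\epsilon/2$, and apply Szemer\'{e}di's Regularity Lemma with parameter $\epsilon'$ to obtain an equitable partition $V_0,V_1,\ldots,V_t$ of $V(G)$ with $|V_0|\le\epsilon' n$, $|V_1|=\cdots=|V_t|$, and $t\le T(\epsilon')$ bounded. Form the reduced graph $R$ on $\{V_1,\ldots,V_t\}$ in which $V_iV_j$ is an edge whenever $(V_i,V_j)$ is $\epsilon'$-regular of density at least $d$. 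The hypothesis $\delta(G)\ge\epsilon n$, together with the usual estimate that irregular and low-density pairs consume only an $O(\epsilon')$-fraction of a typical vertex's neighborhood, gives $\delta(R)\ge(\epsilon/2-O(\epsilon'))t$. In particular $R$ is connected with bounded diameter $D_0=D_0(\epsilon)$, and any set of at most $k$ clusters admits a Steiner tree in $R$ of size at most $kD_0$.

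Next I design the coloring. For each edge $V_iV_j\in E(R)$, allocate a fresh palette of $C_1=C_1(k,D_0)$ colors and color $G[V_i,V_j]$ uniformly at random so that, with positive probability, every vertex in $V_i$ with $\ge(d-\epsilon')|V_j|$ neighbors in $V_j$ has an incident edge of every color of the palette (and symmetrically); a first-moment union-bound argument works once cluster sizes are large enough. For each $v\in V_0$, the degree bound and pigeonhole provide a cluster $V_{j(v)}$ in which $v$ has $\ge(\epsilon/2)|V_{j(v)}|$ neighbors; select $k$ such edges as ``preferred legs'' of $v$ and color them with the $k$ colors of a single palette shared across all of $V_0$. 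Remaining edges receive arbitrary colors. Total: at most $C_1\binom{t}{2}+k=C(\epsilon,k)$ colors, a constant.

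To verify this is a $k$-rainbow coloring, let $S$ be an arbitrary $k$-subset. For each $v\in S\cap V_0$, greedily pick a preferred leg of $v$ whose color differs from those of the legs already chosen for the other $V_0$-vertices of $S$; since $k$ candidates are available at each step and at most $k-1$ colors are forbidden, this succeeds. Let $I'\subseteq\{1,\ldots,t\}$ be the set of cluster indices that now carry a designated representative (an element of $S\cap V_i$, or a leg endpoint); $|I'|\le k$. Pick a Steiner tree $\mathcal{T}$ of $R$ connecting $I'$ and lift it to $G$ by processing its edges from leaves inward: when an edge $V_iV_j\in\mathcal{T}$ is handled with a representative already fixed in $V_i$, select a $G$-edge from that representative to $V_j$ of a color fresh in the partial tree, coinciding with the fixed representative of $V_j$ when $j\in I'$. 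Regularity guarantees a usable neighbor exists and the palette is large enough for fresh colors.

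The principal obstacle is the configuration in which both endpoints of an edge of $\mathcal{T}$ are already pinned, leaving no freedom to pick the connecting $G$-edge. The standard workaround is to ``subdivide'' $\mathcal{T}$ by threading it through free intermediate clusters (plentiful since $\delta(R)$ is large), where full flexibility to choose a representative is retained; one must also tune $C_1$ so that the palette accommodates the $O(kD_0)$ color choices that may occur across $\mathcal{T}$. Once this orchestration is set up, the explicit, bounded constant $C(\epsilon,k)$ claimed in the theorem emerges.
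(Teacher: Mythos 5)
Your proposal shares the paper's starting point (Regularity Lemma plus a random coloring with constantly many colors plus a bounded connecting structure), but the architecture you build on top of it has genuine gaps. The central one is that you route each $k$-set $S$ through a Steiner tree of the \emph{reduced} graph and lift it edge by edge. Regularity gives you nothing about a \emph{specific} vertex: a vertex $v\in S\cap V_i$ may have no neighbor at all in the cluster $V_j$ that your tree $\mathcal{T}$ attaches to $V_i$, and it need not meet the threshold $\ge(d-\epsilon')|V_j|$ under which your random coloring guarantees all palette colors at $v$. You invoke a degree/pigeonhole argument only for $V_0$, but every pinned vertex of $S$ has this problem. A second, unaddressed issue is coordination at internal nodes of the lifted tree: if two branches of $\mathcal{T}$ arrive at an internal cluster $V_i$ at different vertices of $V_i$, the union you build is not connected, and you have no mechanism (and no coloured edges inside clusters) to join them. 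Your leaves-inward processing does not resolve this; you would need to fix a single typical representative per internal cluster in advance, which then reintroduces the ``both endpoints pinned'' configuration that you yourself flag and only sketch a fix for. As written, the verification that the coloring is a $k$-rainbow coloring does not go through.

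For contrast, the paper sidesteps all three difficulties at once. It uses the \emph{degree-form} regularity lemma (every vertex, not just a typical one, has at least $\frac{\epsilon}{3\ell}n$ neighbours in some class), from which it derives that any two vertices in the same class are joined by more than $(3k)^4\log n$ edge-disjoint paths of length at most four. A random coloring with $4k$ colors then gives, for each class, $k$ edge-disjoint rainbow paths between any two of its vertices, one in each private sub-palette $\{c_i^1,\dots,c_i^4\}$. Finally it fixes, once and for all, a single tree $T$ in $G$ of bounded size meeting every class, coloured with all-distinct fresh colors. For any $S$, each $v_i\notin T$ walks to the fixed tree-node of its own class by the rainbow path in its private sub-palette; the union of these $k$ paths with a subtree of $T$ is automatically rainbow and connected. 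The per-query construction, the pinning problem, and the branch-coordination problem never arise. If you want to salvage your route, you should import the degree-form lemma so that every $S$-vertex has a dense neighbourhood in some class, and replace the dynamic lifting of $\mathcal{T}$ by a fixed, distinctly coloured tree in $G$ itself.
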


The proof of Theorem \ref{C} is based on a modified degree-form
version of Szemer\'{e}di's Regularity Lemma in \cite{Chakraborty}.
First of all, we need some more terminology and notation for stating
Szemer\'{e}di's Regularity Lemma.

Let $G$ be a graph and $X,\ Y$  be two subsets of $V(G)$. The
\emph{edge density} of the pair $(X, Y)$ is defined as $d(X, Y) =
|E(X, Y)|/(|X||Y|)$. A pair $(X, Y)$ is called
\emph{$\epsilon$-regular} if for every $X'\subseteq X$ and
$Y'\subseteq Y$ satisfying $|X'|\geq \epsilon|X|$ and $|Y'|\geq
\epsilon|Y|$, we have $|d(X', Y')-d(X, Y)|\leq \epsilon$. A
partition $V_1, \cdots, V_k$ of the vertex set of a graph $G$ is
called an \emph{equipartition} if $|V_i|$ and $|V_j|$ differ by no
more than 1 for all $1\leq i < j\leq k$. In particular, every $V_i$
has one of two possible sizes. The order of an equipartition denotes
the number of partition classes ($k$ above). An equipartition $V_1,
\cdots, V_k$ of the vertex set of a graph is called
\emph{$\epsilon$-regular} if all but at most $\epsilon{k \choose 2}$
of the pairs $(V_i, V_j)$ are $\epsilon$-regular. Szemer\'{e}di's
Regularity Lemma can be formulated as follows. In the sequel,
without mentioning explicitly, we assume that $\epsilon$ is a small
enough constant.

\begin{lem}[\cite{Szemeredi}]\label{lem1}
$($Regularity Lemma$)$ For every $\epsilon >0$ and positive integer
$K$, there exists $N = N_1(\epsilon, K)$, such that any graph with
$n\geq N$ vertices has an $\epsilon$-regular equipartition of order
$\ell$, where $K\leq \ell\leq N$.
\end{lem}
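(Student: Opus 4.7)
The plan is to follow the Regularity Lemma strategy of Chakraborty et al.\ used to prove Theorem~\ref{Chakraborty} for $rc(G)$, adapted to produce rainbow Steiner trees connecting arbitrary $k$-sets. I would first apply a degree-form variant of Lemma~\ref{lem1} with small regularity parameter $\epsilon' \ll \epsilon$ and density threshold $d \ll \epsilon$, obtaining an equipartition $V_1, \ldots, V_\ell$ of $V(G)$ (up to a small exceptional class) with $\ell = \ell(\epsilon, k)$ bounded. The reduced graph $R$ on $\{1, \ldots, \ell\}$, with $ij \in E(R)$ iff $(V_i, V_j)$ is $\epsilon'$-regular with density at least $d$, inherits high minimum degree $\delta(R) \geq (\epsilon/2)\ell$ from the hypothesis $\delta(G) \geq \epsilon n$ by a standard counting argument; in particular $R$ has bounded diameter $\leq D(\epsilon)$.

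Next I would define an edge-coloring of $G$ with a constant number of colors. For each edge $ij \in E(R)$, reserve a private palette $P_{ij}$ of $C_0(k)$ fresh colors, and color the edges of $E(V_i, V_j)$ using $P_{ij}$ in a spread-out fashion so that all but at most $\epsilon'|V_i|$ vertices of $V_i$ see every color of $P_{ij}$ on edges into $V_j$. Inside each class $V_i$, use a handful of additional ``interior'' colors to make short rainbow walks available. Since $|E(R)| \leq \binom{\ell}{2}$ is a constant, the total number of colors is some $C = C(\epsilon, k)$.

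The core step is to verify, for every $k$-set $S = \{s_1, \ldots, s_k\} \subseteq V(G)$, the existence of a rainbow $S$-tree. Let $I = \{i(s_1), \ldots, i(s_k)\}$ be the indices of their classes, and pick a Steiner tree $T_R$ in $R$ connecting $I$ with at most $t = t(\epsilon, k)$ edges; such $T_R$ exists because $R$ has bounded diameter. I would lift $T_R$ to a rainbow tree $T \subseteq G$ by rooting $T_R$ and processing it in breadth-first order: at each newly visited reduced vertex, I pick a single ``transit'' vertex in the corresponding class that is typical with respect to every incident $T_R$-edge; each lifted edge $ij \in T_R$ is then realized by an actual $G$-edge between the two transit vertices, colored from $P_{ij}$ by a color not yet used. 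Since $|P_{ij}| \geq t + 1$ and at most $t-1$ colors have been chosen previously, a fresh color is always available. Finally, I attach each $s_j$ to the transit vertex of $V_{i(s_j)}$ through $G[V_{i(s_j)}]$ using fresh interior colors.

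The main obstacle is the mutual \emph{compatibility} of the transit-vertex choices along $T_R$: when an internal reduced vertex $i$ has several $T_R$-neighbors, the transit vertex at $V_i$ must simultaneously be typical for all incident regular pairs, and each adjacent transit vertex must be drawn from the many suitable neighbors of its parent. Both requirements succeed because each condition excludes only an $\epsilon'$-fraction of the relevant class, while the total number of conditions at any vertex is at most $t$; choosing $\epsilon' \ll 1/t$ makes a union bound lossless. The resulting $C$ depends only on $\epsilon$ and $k$, yielding Theorem~\ref{C}.
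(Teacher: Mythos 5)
Your proposal does not address the statement at hand. The statement is Szemer\'{e}di's Regularity Lemma itself (Lemma~\ref{lem1}), which the paper does not prove but simply cites from \cite{Szemeredi}; it asserts the \emph{existence} of an $\epsilon$-regular equipartition of bounded order for every sufficiently large graph. What you have written instead is a proof sketch of Theorem~\ref{C} (the bound $rx_k(G)\leq C(\epsilon,k)$ for graphs of minimum degree $\epsilon n$): you \emph{assume} a degree-form regularity partition, build a reduced graph, and lift Steiner trees through regular pairs. That argument presupposes the very lemma you were asked to establish, so as a proof of Lemma~\ref{lem1} it is circular and proves nothing.

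A genuine proof of the Regularity Lemma requires an entirely different mechanism, namely the index (mean-square density) increment argument: one defines $q(\mathcal{P})=\sum_{i<j}\frac{|V_i||V_j|}{n^2}d(V_i,V_j)^2$ for a partition $\mathcal{P}$, shows that $0\leq q(\mathcal{P})\leq 1$ and that $q$ does not decrease under refinement, and proves that if an equipartition of order $\ell$ is not $\epsilon$-regular then it admits an equitable refinement into at most $\ell\cdot 4^{\ell}$ classes whose index exceeds the old one by at least $\epsilon^5/2$. Iterating at most $2\epsilon^{-5}$ times yields an $\epsilon$-regular equipartition whose order is bounded by a (tower-type) function $N_1(\epsilon,K)$ of $\epsilon$ and the initial order $K$ alone. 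None of these ingredients --- the index functional, the defect form of the Cauchy--Schwarz inequality used to certify the increment, or the termination bound --- appears in your proposal. (Your sketch of Theorem~\ref{C} is broadly in the spirit of the paper's actual proof of that theorem, but that is a different statement with its own proof in the paper.)
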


The modified degree-form version of the Regularity Lemma in
\cite{Chakraborty} comes to use in our proof as follows. Here
$[\ell]=\{1,2,\cdots,\ell\}$.

\begin{lem}[\cite{Chakraborty}]\label{regular}
$($Regularity Lemma-new version$)$ For every $\epsilon >0$ and
positive integer $K$ there is $N =N_2(\epsilon, K)$ such that the
following holds: If $G = (V, E)$ is a graph with $n> N$ vertices and
minimum degree at least $\epsilon n$ then there is a subgraph
$G^{''}$ of $G$, and a partition of $V$ into $V^{''}_1, \cdots,
V^{''}_{\ell}$ with the following properties:

1. $K\leq \ell\leq N$,

2. for all $i\in [\ell]$, $(1-\epsilon)\frac{n}{\ell}\leq |V^{''}_i|\leq (1+\epsilon^3)\frac{n}{\ell}$,

3. for all $i\in [\ell]$, $V^{''}_i$ induces an independent set in $G^{''}$,

4. for all $i,j\in [\ell]$, $(V^{''}_i, V^{''}_j)$ is an $\epsilon^3$-regular pair in $G^{''}$, with density either 0 or at least $\frac{\epsilon}{16}$,

5. for all $i\in [\ell]$ and every $v\in V^{''}_i$ there is at least one other class $V^{''}_j$ so that the number of neighbors of $v$ in $G^{''}$ belonging to $V^{''}_j$ is at least $\frac{\epsilon}{3\ell}n$.
\end{lem}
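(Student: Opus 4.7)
The plan is to deduce this degree-form regularity statement from the classical Szemer\'{e}di Regularity Lemma (Lemma \ref{lem1}) by an edge-cleaning procedure applied to the partition. First, I would invoke Lemma \ref{lem1} with regularity parameter $\epsilon^{3}$ and with a lower bound $K' \ge \max\{K, 10/\epsilon\}$, obtaining an $\epsilon^{3}$-regular equipartition $V_{1}, \ldots, V_{\ell}$ of $V(G)$ where $K \le \ell \le N_{1}(\epsilon^{3}, K')$. Setting $V_i'' := V_i$, Property 1 is immediate with $N := N_{1}(\epsilon^{3}, K')$, and Property 2 follows from the equipartition property, since $|V_i| \in \{\lfloor n/\ell\rfloor, \lceil n/\ell\rceil\}$, which lies in $[(1-\epsilon)n/\ell, (1+\epsilon^{3})n/\ell]$ for $n$ sufficiently large.

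Next I would construct $G''$ by deleting from $G$ every edge that falls into one of three categories: (i) edges with both endpoints in the same $V_i$; (ii) edges in a pair $(V_i, V_j)$ that is not $\epsilon^{3}$-regular in $G$; and (iii) edges in a pair $(V_i, V_j)$ whose $G$-density is strictly less than $\epsilon/16$. Property 3 follows directly from (i). For Property 4, any surviving pair $(V_i'', V_j'')$ is $\epsilon^{3}$-regular in $G$ with density at least $\epsilon/16$, and since no edges are removed from inside a surviving pair, both its density and its $\epsilon^{3}$-regularity carry over to $G''$; all other pairs end up with density $0$.

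The main work goes into verifying Property 5. Fix any $v \in V_i''$; by hypothesis $\deg_G(v) \ge \epsilon n$. I would bound separately the number of neighbors lost at each of steps (i)--(iii). Step (i) costs at most $|V_i| - 1 \le (1+\epsilon^{3})n/\ell$, which is at most $\epsilon n / 10$ once $\ell \ge K' \ge 10/\epsilon$. Step (ii) is controlled by counting: among the at most $\epsilon^{3}\binom{\ell}{2}$ irregular pairs, all but at most $\epsilon^{3/2}\ell$ classes are involved in at most $\epsilon^{3/2}\ell$ irregular pairs, and each such pair contributes at most $(1+\epsilon^{3})n/\ell$ to the loss, so step (ii) costs $O(\epsilon^{3/2} n)$ for $v$ lying in a typical class. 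Step (iii) is bounded through the density condition: $|E_G(V_i, V_j)| < (\epsilon/16)|V_i||V_j|$ whenever $(V_i, V_j)$ is low-density, so on average over $V_i$ the loss is at most $(\epsilon/16) n$. Putting these together, $\deg_{G''}(v) \ge \epsilon n (1 - 1/10 - 1/16) - O(\epsilon^{3/2} n) \ge \epsilon n / 3$, and since these surviving neighbors are distributed among the $\ell - 1$ other classes, the pigeonhole principle yields some $V_j''$ containing at least $\epsilon n / (3\ell)$ neighbors of $v$, which is Property 5.

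The principal obstacle is that the step-(iii) bound above is only an average over $V_i$, whereas Property 5 must hold for \emph{every} $v$. The standard workaround is twofold: take the regularity parameter in Lemma \ref{lem1} to be a much smaller power of $\epsilon$ (say $\epsilon^{c}$ for large $c$), so that even crude worst-case per-vertex bounds are tiny compared to $\epsilon n$; and, if a class contains an atypical vertex that is incident to too many irregular or low-density pairs, reassign such a vertex to a neighboring class before freezing the partition $V_{1}'', \ldots, V_{\ell}''$. The slack between $\epsilon$ and $\epsilon^{3}$ in the regularity requirement, and the slack in the density threshold $\epsilon/16$ versus the degree loss, are both sufficient to absorb a vanishingly small number of such reassignments without breaking Properties 2 or 4. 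Setting $N_{2}(\epsilon, K) := N_{1}(\epsilon^{c}, K')$ for the final choice of parameters completes the proof.
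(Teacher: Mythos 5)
The paper itself offers no proof of Lemma~\ref{regular}: it is quoted verbatim from Chakraborty et al.\ \cite{Chakraborty}, so your attempt has to be judged as a reconstruction of the argument in that reference. Your overall architecture is the right one (this is how degree-form regularity statements are standardly derived from Lemma~\ref{lem1}), Properties 1--4 are handled correctly, and you have correctly located the crux: Property 5 is a \emph{per-vertex} guarantee, while the natural bounds on the edges deleted at steps (ii) and (iii) are only averages over a class.

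However, your resolution of that crux has a genuine gap at step (iii). Deleting all edges of pairs whose $G$-density is below $\epsilon/16$ can wipe out essentially the entire degree of an individual vertex, and the number of vertices for which this happens is not ``vanishingly small.'' The only bound your argument supplies is Markov's inequality: the edges from $V_i$ into low-density classes number at most $(\epsilon/16)|V_i|n$, so the number of $v\in V_i$ that lose more than $\epsilon n/2$ of their degree at step (iii) is bounded only by $|V_i|/8$ --- a constant fraction of \emph{every} class. Shrinking the regularity parameter to $\epsilon^{c}$ does not improve this ``crude worst-case per-vertex bound'' at all, since the density threshold $\epsilon/16$ and the class sizes $n/\ell$ are unchanged; a single vertex may still have all $\epsilon n$ of its neighbours sitting in low-density pairs. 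Reassigning a constant fraction of the vertices would destroy Properties 2 and 4, so the workaround you describe cannot absorb these vertices. The missing ingredient is the standard consequence of regularity: in an $\epsilon'$-regular pair $(V_i,V_j)$ of density $d_{ij}$, all but at most $\epsilon'|V_i|$ vertices of $V_i$ have at most $(d_{ij}+\epsilon')|V_j|$ neighbours in $V_j$. Applied to every \emph{regular} low-density pair at $V_i$, this bounds the step-(iii) loss of all but $O(\sqrt{\epsilon'}\,|V_i|)$ vertices of $V_i$ by roughly $(\epsilon/16+O(\sqrt{\epsilon'}))n$, which the hypothesis $\deg_G(v)\geq\epsilon n$ comfortably absorbs; only then is the exceptional set genuinely of size $o(\epsilon^{3}n)$ and small enough to redistribute. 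This is precisely why the source proof routes through the Koml\'{o}s--Simonovits degree form, which carries an exceptional class $V_0$ that is afterwards dissolved. Secondarily, the reassignment step itself is only gestured at: you must dissolve entire bad classes (those meeting many irregular pairs), not just stray vertices; you must choose each exiled vertex's new class so that afterwards it has a dense regular pair into some class containing at least $\epsilon n/(3\ell)$ of its neighbours (this needs the minimum-degree hypothesis again, via the observation that every class with many neighbours of $v$ admits at least one dense regular partner); and you must delete the exile's edges into its new class to preserve Property 3. None of these points is fatal to the strategy, but as written the proof does not go through.
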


Given a graph $G=(V,E)$ and an edge coloring $c$ : $E \rightarrow \mathcal{C}$, let $\pi_c$ denote the corresponding partition of $E$ into at most $|\mathcal{C}|$ components.  For two edge-colorings $c$ and $c'$, $c'$ is \emph{a refinement} of $c$ if $\pi_{c'}$ is a
refinement of $\pi_c$, namely, $\pi_{c'}(e)=\pi_{c'}(e')$ always implies $\pi_c(e)=\pi_c(e')$.

\begin{obs}[\cite{Chakraborty}]\label{refinement}
Let $c$ and $c'$ be two edge-colorings of a graph $G$ such that $c'$ is a refinement of $c$. For any path $P$ in $G$, if $P$ is a rainbow path under $c$, then $P$ is a rainbow path under $c'$. In particular, if $c$ makes G rainbow connected, then so does $c'$.
\end{obs}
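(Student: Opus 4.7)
The plan is to unwind the definition of refinement to its contrapositive form, which is the only input actually needed. By definition, $c'$ being a refinement of $c$ means that $\pi_{c'}(e)=\pi_{c'}(e')$ implies $\pi_c(e)=\pi_c(e')$ for every pair of edges $e,e'$, i.e., $c'(e)=c'(e')$ implies $c(e)=c(e')$. Contrapositively, $c(e)\neq c(e')$ implies $c'(e)\neq c'(e')$. This is the only property of the refinement relation that will be used.

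Now fix any path $P$ in $G$ that is rainbow under $c$. I would pick an arbitrary pair of distinct edges $e, e'$ on $P$. Since $P$ is rainbow under $c$, the hypothesis gives $c(e)\neq c(e')$, and by the contrapositive observation above we immediately obtain $c'(e)\neq c'(e')$. As $e, e'$ were arbitrary distinct edges of $P$, the path $P$ is rainbow under $c'$ as well, proving the first assertion.

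For the ``in particular'' part, suppose $c$ rainbow-connects $G$. Then for every pair of vertices $u,v\in V(G)$ there exists a $u$-$v$ path $P_{uv}$ that is rainbow under $c$. Applying the first part to each such $P_{uv}$ shows that $P_{uv}$ remains rainbow under $c'$, so $c'$ also rainbow-connects $G$. There is no real obstacle here: the entire content of the observation is a one-line logical manipulation of the definition, so the proof is essentially a formal unpacking rather than a combinatorial argument.
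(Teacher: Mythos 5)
Your proof is correct and is exactly the intended argument: the paper states this as an observation cited from the literature without proof, precisely because it is the immediate contrapositive unpacking of the definition of refinement that you give. Nothing is missing.
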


The following lemma bounds the number of edge-disjoint paths of length at most four between every two vertices in the same partition of a graph with some given property.

\begin{lem}\label{3.8}
For every $\epsilon >0$ and every fixed integer $k$, there exists $N =N_3(\epsilon,k)$ such that any graph $G = (V, E)$ with $n > N$ vertices and minimum degree at least $\epsilon n$ satisfies the following: there is a partition $\Pi$ of $V$ into $V_1,V_2,\cdots,V_{\ell}$ such that for every $i\in [\ell]$ and every two vertices $u, v \in V_i$, the number of edge-disjoint paths of length at most four from $u$ to $v$ is larger than $(3k)^4 \log n$.
\end{lem}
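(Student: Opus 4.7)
The plan is to invoke the modified degree-form Regularity Lemma (Lemma~\ref{regular}) with a regularity parameter $\epsilon'$ and a lower bound $K$ chosen as sufficiently small (resp.\ large) functions of $\epsilon$ and $k$, thereby obtaining a partition $V_1,\ldots,V_\ell$ of $V(G)$ together with a spanning subgraph $G''$ satisfying properties~1--5. This partition will be the desired $\Pi$, and it suffices to exhibit, inside $G''$, at least $(3k)^4\log n$ edge-disjoint paths of length at most four between any two vertices lying in a common class $V_i$.

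Fix such $u,v\in V_i$. Property~5 supplies classes $V_{j_u},V_{j_v}$ (possibly equal) such that $A:=N_{G''}(u)\cap V_{j_u}$ and $B:=N_{G''}(v)\cap V_{j_v}$ each have size at least $\epsilon n/(3\ell)$; since $\ell$ is bounded by a constant depending only on $\epsilon'$ and $K$, both $|A|$ and $|B|$ are $\Theta(n)$. I would then split into two sub-cases. If the pair $(V_{j_u},V_{j_v})$ has density at least $\epsilon/16$ in $G''$, then the $\epsilon'$-regularity of this pair (property~4) implies that all but an $\epsilon'$-fraction of $a\in A$ have $\Theta(n)$ neighbors in $B$, producing $\Omega(n)$ length-three paths $u\text{-}a\text{-}b\text{-}v$ whose internal vertices can be chosen pairwise distinct. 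If instead $(V_{j_u},V_{j_v})$ has density $0$ in $G''$, then because every vertex of $V_{j_u}$ still has $G''$-degree close to $\epsilon n$, there must exist a third class $V_m$ for which both $(V_{j_u},V_m)$ and $(V_m,V_{j_v})$ are $\epsilon'$-regular of density at least $\epsilon/16$; two nested applications of regularity then yield $\Omega(n)$ length-four paths $u\text{-}a\text{-}m\text{-}b\text{-}v$ with all internal vertices distinct. In either case, since internally vertex-disjoint paths are automatically edge-disjoint, we obtain $\Omega(n)$ edge-disjoint $(u,v)$-paths of length at most four, comfortably exceeding $(3k)^4\log n$ once $n>N_3(\epsilon,k)$.

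The chief obstacle is calibrating the parameters. The regularity constant $\epsilon'$ must be small enough that (i) the ``bad'' $\epsilon'$-fraction of exceptional vertices never consumes the dense sets $A$ or $B$, and (ii) in the density-$0$ branch a qualifying intermediate class $V_m$ always exists, despite the at-most-$\epsilon'\binom{\ell}{2}$ irregular pairs permitted by property~4. Once $\epsilon'$ and $K$ are pinned down as explicit functions of $\epsilon$ (making $\ell$ a constant independent of $n$), the threshold $N_3(\epsilon,k)$ is finally chosen large enough that the constant hidden inside the $\Omega(n)$ path count surpasses $(3k)^4\log n$.
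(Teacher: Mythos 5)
Your overall architecture (apply Lemma~\ref{regular}, take its partition as $\Pi$, use property~5 to get large neighbour sets $A\subseteq V_{j_u}$, $B\subseteq V_{j_v}$, then use property~4 to extract $\Omega(n)$ internally disjoint paths of length $3$ or $4$) is the right one, and it is essentially what the paper intends --- note, though, that the paper's own ``proof'' consists only of the parameter choice ($L=N_1(\epsilon,1)$ and $\epsilon^4 N/L>(3k)^4\log N$) followed by a deferral to Chakraborty et al., so the burden of the actual path-counting argument is exactly the part you sketch.

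There is, however, a genuine gap in your density-$0$ branch. You assert that an intermediate class $V_m$ with both $(V_{j_u},V_m)$ and $(V_m,V_{j_v})$ dense ``must exist because every vertex of $V_{j_u}$ has $G''$-degree close to $\epsilon n$.'' Neither the premise nor the inference is justified: property~5 of Lemma~\ref{regular} only guarantees that each vertex has at least $\frac{\epsilon}{3\ell}n$ neighbours in \emph{some single} class, not that its total $G''$-degree is near $\epsilon n$; and even granting high $G''$-degrees, the classes dense to $V_{j_u}$ and those dense to $V_{j_v}$ each only cover an $\epsilon$-fraction of $[\ell]$, so for $\epsilon<1/2$ a counting argument cannot force them to meet. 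The missing observation is that $V_i$ itself is the required intermediate class: since $u\in V_i$ has at least $\frac{\epsilon}{3\ell}n>0$ neighbours in $V_{j_u}$ inside $G''$, property~4 forces $d(V_i,V_{j_u})\geq\frac{\epsilon}{16}$ (in particular $j_u\neq i$ by property~3), and likewise $d(V_i,V_{j_v})\geq\frac{\epsilon}{16}$. Routing all paths as $u\-a\-w\-b\-v$ with $w\in V_i$, $a\in A$, $b\in B$ in fact handles every case uniformly (including $j_u=j_v$, where your ``pair'' degenerates to a single independent class and your case split is not well posed), and a greedy selection of distinct $a,w,b$ yields at least roughly $\frac{\epsilon^2}{96}\cdot\frac{n}{\ell}\geq\epsilon^4\frac{n}{\ell}>(3k)^4\log n$ edge-disjoint paths, matching the paper's choice of $N_3$. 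A small further correction: Lemma~\ref{regular} asserts that \emph{all} pairs $(V''_i,V''_j)$ are $\epsilon^3$-regular, so there is no ``at most $\epsilon'\binom{\ell}{2}$ irregular pairs'' allowance to worry about --- that caveat belongs to the unmodified Lemma~\ref{lem1}.
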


\begin{proof}
Given $\epsilon >0$ and a fixed integer $k$, let $L = N_1(\epsilon,1)$ and set $N$ to be the smallest number such that $\epsilon^4\frac{N}{L}> (3k)^4 \log N$. Now, given any graph $G = (V, E)$ with $n>N$ vertices and minimum degree at least $\epsilon n$, we apply Lemma \ref{regular} with parameters $\epsilon$ and $1$. The following proof is similar to that in \cite{Chakraborty}, so we omit it here.
\end{proof}

For a fixed integer $k$, we define a set of $4k$ distinct colors $\mathcal{C}=\{c^1_1,c^2_1,c^3_1,c^4_1,c^1_2,c^2_2,c^3_2,c^4_2,\cdots,\\
c^1_k,c^2_k,c^3_k,c^4_k\}$.
Given a coloring $c$: $E \rightarrow \mathcal{C}$,
a pair of vertices is called $c_i$-rainbow connected for some $i\in[k]$,
if there is a rainbow path between the vertices only using the colors $c^1_i$, $c^2_i$, $c^3_i$, $c^4_i$.
The following lemma is the key in the proof of Theorem \ref{C}.

\begin{lem}\label{3.6}
For every $\epsilon >0$ and every fixed integer $k$, there is $N =N_4(\epsilon,k)$ such that any connected graph $G = (V, E)$ with $n > N$ vertices and minimum degree at least $\epsilon n$ satisfies the following: there is a partition $\Pi$ of $V$ into $V_1, \cdots, V_{\ell}$ ($\ell \leq N$), and a coloring $c$ : $E \rightarrow \mathcal{C}$ such that for every $i\in [\ell]$ and every two vertices $u, v \in V_i$, there exist $k$ edge-disjoint rainbow paths between $u$ and $v$ under $c$.
\end{lem}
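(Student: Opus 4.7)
The plan is to apply Lemma \ref{3.8} to fix a good partition and then produce the coloring $c$ by the probabilistic method, choosing each edge's color independently and uniformly at random from $\mathcal{C}$.

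First I would invoke Lemma \ref{3.8} with parameters $\epsilon$ and $k$ to get the partition $V_1,\ldots,V_\ell$ (with $\ell$ bounded by the constant $N_3(\epsilon,k)$) in which every pair of vertices in a common class has more than $M:=(3k)^4\log n$ edge-disjoint paths of length at most four joining it. My goal is then to exhibit a coloring $c:E\to\mathcal{C}$ under which, for every $i\in[\ell]$, every pair $u,v\in V_i$, and every $j\in[k]$, there is a $c_j$-rainbow $u$--$v$ path. Because the $k$ palettes $\{c^1_j,c^2_j,c^3_j,c^4_j\}$ for $j=1,\ldots,k$ are pairwise disjoint subsets of $\mathcal{C}$, any family of paths chosen one per $j$ using only its own palette uses pairwise disjoint color sets; since each edge carries exactly one color, such paths are automatically edge-disjoint, giving the $k$ edge-disjoint rainbow paths required by the lemma.

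To find $c$, I would color each edge of $G$ independently and uniformly at random from $\mathcal{C}$. Fix $u,v\in V_i$ and $j\in[k]$. For a single $u$--$v$ path of length $m\le 4$, the probability that its edges receive distinct colors from $\{c^1_j,c^2_j,c^3_j,c^4_j\}$ equals $\binom{4}{m}m!/(4k)^m$, which is at least $p:=3/(32k^4)$ for every $1\le m\le 4$. The $M$ paths furnished by Lemma \ref{3.8} are edge-disjoint, so the associated "rainbow" events are mutually independent, and hence
\[
\Pr[\text{no } c_j\text{-rainbow } u\text{--}v \text{ path exists}]\le (1-p)^{M}\le \exp\!\bigl(-p\,(3k)^4\log n\bigr)=n^{-243/32}.
\]
A union bound over the fewer than $n^2$ ordered pairs in a common class and the $k$ choices of $j$ gives total failure probability at most $k n^{2-243/32}\to 0$, so for $n$ larger than a suitable $N_4(\epsilon,k)$, a coloring with the desired property exists with positive probability.

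The only real difficulty is already encapsulated in Lemma \ref{3.8}: once the $\Omega(k^4\log n)$ edge-disjoint short paths are available, the probabilistic step is routine. The delicate bookkeeping is in calibrating the constants so that the exponent $p\cdot (3k)^4$ produces a polynomial decay strong enough to beat the $kn^2$ union bound, which explains the choice $(3k)^4\log n$ in the statement of Lemma \ref{3.8}. I would close the argument by setting $N_4(\epsilon,k)=\max\{N_3(\epsilon,k),N_0\}$, where $N_0$ is the threshold beyond which the above tail estimate is strictly less than $1$.
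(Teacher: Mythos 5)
Your proposal is correct and follows essentially the same route as the paper: apply Lemma \ref{3.8} to fix the partition, color edges uniformly and independently at random from $\mathcal{C}$, bound the probability that a fixed short path is $c_j$-rainbow from below by $4!/(4k)^4=3/(32k^4)$, and finish with independence over the edge-disjoint paths plus a union bound over pairs and indices $j$. Your explicit remark that paths drawn from the $k$ disjoint palettes are automatically edge-disjoint is a detail the paper leaves implicit, but it is the same argument.
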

\begin{proof}
First we apply Lemma \ref{3.8} to get the partition $\Pi$.
Then we color every edge $e\in E$ with the colors in $\mathcal{C}$ uniformly and independently at random.
Observe that a fixed path $P$ of length at most four is a $c_j$-rainbow path with probability at least $\frac{4!}{(4k)^{4}}$ for each $j$ with $1\leq j\leq k$.
Thus a pair of vertices in $V_i$ is not all $c_j$-rainbow connected with probability at most $k\large(1-\frac{4!}{(4k)^{4}}\large)^{(3k)^4\log n}= o(n^{-2})$. It follows from the union bound that with positive probability, all such pairs are all $c_j$-rainbow connected for each $j$ with $1\leq j\leq k$. Hence the desired coloring must exist.
\end{proof}

\noindent\textbf{Proof of Theorem \ref{C}.} For a given $\epsilon >0$ and a fixed integer $k$, set $N =N_4(\epsilon,k)$ and $C =\frac{3N}{\epsilon}+4k$. Since $rx_k(G)\leq n-1$ by Observation \ref{obs1}, it follows that any connected graph $G=(V,E)$ with $n\leq C$ vertices satisfies $rx_k(G)\leq C$. So we assume that $n > C\geq N$. Let $\Pi$ be the partition of $V$ into $V_1,V_2, \cdots V_{\ell}$ from Lemma \ref{3.6}, where $\ell\leq N$.

Since the diameter of $G$ is bounded by $\frac{3}{\epsilon}$ in \cite{Chakraborty}, there is a connected subtree $T =(V_T,E_T)$ of $G$ with at most $\ell\cdot diam(G)\leq \frac{3}{\epsilon}N$ vertices such that for every $i\in [\ell]$, $V_T \cap V_i\neq \varnothing$.
For each $i\in[\ell]$, we choose one vertex from $V_T \cap V_i$ respectively, and call these vertices tree-nodes.
Let $c$ : $E \rightarrow \mathcal{C}$ be the coloring from Lemma \ref{3.6}, and let $\mathcal{H}=\{h_1, h_2, \cdots, h_{|E_T|}\}$ be a set of $|E_T|\leq \frac{3}{\epsilon}N$ different fresh colors. We refine $c$ by recoloring each edge $e_i\in E_T$ with the color $h_i\in \mathcal{H}$.
Let $c'$ : $E \rightarrow (\mathcal{C}\cup\mathcal{H})$ be the resulting coloring of $G$.
Next we will show this coloring $c'$ makes $G$ $k$-rainbow connected.

Suppose $S=\{v_1,v_2,\cdots,v_k\}$ is a set of $k$ vertices in $G$.
Without loss of generality, assume that $\{v_1,v_2,\cdots,v_q\}\subseteq V_T$,
and $\{v_{q+1},v_{q+2},\cdots,v_{k}\}\subseteq V\setminus V_T$ for some $0\leq p\leq k$.
For each $v_i$ with $q+1\leq i\leq k$, let $t(v_i)$ be the corresponding tree-node in the same component of $v_i$.
From Lemma \ref{3.6}, $v_i$ and $t(v_i)$ are connected by a rainbow path $P_i$ of length at most four,
which receives the colors from $\{c_i^1,c_i^2,c_i^3,c_i^4\}$ under the original coloring $c$.
Since $c'$ is a refinement of $c$, the path $P_i$ is still rainbow under $c'$.
By the definition of $c'$, there is a rainbow tree $T^*(\subseteq T)$ connecting the vertices
$v_1,\cdots,v_q,t(v_{q+1}),\cdots,t(v_k)$ using the colors from $\mathcal{H}$.
The paths $P_{q+1}, P_{q+2}, \cdots, P_k$ together with the tree $T^*$ induce
a connected rainbow subgraph $G^{*}$ of $G$ connecting $S$.
Thus there is a rainbow $S$-tree by generating a spanning tree of $G^{*}$.
Consequently, $rx_k(G)\leq |E_T|+4k\leq C$.

\subsection{An upper bound via connected $2$-step dominating sets}

In this subsection, we will continue the research on the $k$-rainbow index of a graph with the aid of connected $2$-step dominating sets, and obtain our second bound. First of all, we state the following lemma.

\begin{lem}[\cite{Krivelevich and Yuster}]\label{two spanning subgraphs}
A graph with minimum degree $\delta$ has two edge-disjoint spanning subgraphs, each with minimum degree at least $\lfloor(\delta-1)/2\rfloor$.
\end{lem}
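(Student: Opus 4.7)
The plan is a classical Eulerian-circuit coloring argument. Augment $G$ to a new graph $G'$ by introducing an auxiliary vertex $v_0$ joined to every odd-degree vertex of $G$; since the number of odd-degree vertices is even, every vertex of $G'$ has even degree, so each connected component of $G'$ is Eulerian.

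In every component fix an Eulerian circuit, starting (and ending) at $v_0$ whenever $v_0$ lies in that component, and otherwise at an arbitrary vertex. Traverse each circuit and color its edges alternately red and blue; then discard $v_0$ and its incident edges. Let $G_1$ and $G_2$ be the spanning subgraphs of $G$ formed by the red and blue edges respectively; by construction they are edge-disjoint with $E(G_1)\cup E(G_2)=E(G)$.

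The min-degree bound rests on the observation that at every interior visit of a vertex along the Eulerian circuit, the two consecutive edges receive opposite colors; hence an interior vertex $v$ of its circuit has exactly $d_{G'}(v)/2$ edges of each color in $G'$. An imbalance can arise only at the start vertex of a circuit whose length is odd. By starting at $v_0$ whenever possible, any such imbalance is pushed onto $v_0$ and then discarded. The main obstacle is the residual parity bookkeeping in two cases: (i) for an odd-degree vertex $v$ of $G$ one has to track whether the edge $vv_0$ was colored red or blue, since its removal shifts one of $d_{G_1}(v),d_{G_2}(v)$ down by one; and (ii) for the start vertex of an odd-length Eulerian circuit in a component of $G'$ that does not contain $v_0$, the balance is off by two. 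In both cases, using $d_G(v)\ge\delta$ together with the fact that such a start vertex must have even degree in $G$, a short case analysis on the parity of $\delta$ gives $\min\{d_{G_1}(v),d_{G_2}(v)\}\ge\lfloor(\delta-1)/2\rfloor$, which is precisely where the floor expression enters.
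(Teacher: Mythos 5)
The paper does not prove this lemma itself --- it is quoted from Krivelevich and Yuster --- and your Eulerian-circuit argument (attach $v_0$ to the odd-degree vertices, alternately two-color each Eulerian circuit starting at $v_0$ where possible, then delete $v_0$) is exactly the proof given in that source. Your bookkeeping is correct: the only losses are one edge at an odd-degree vertex and an imbalance of two at the start of an odd circuit in a $v_0$-free component, where all degrees are even, and both cases land exactly on $\lfloor(\delta-1)/2\rfloor$.
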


\begin{thm}\label{thm2}
Let $G$ be a connected graph with $n$ vertices and minimum degree $\delta$.
Then $rx_k(G)< 10nk2^t/(\delta-2^{t+1}+2)-k-2$,
where $t$ is the integer such that $2^t\leq k<2^{t+1}$.
\end{thm}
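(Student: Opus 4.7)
The plan is to adapt the connected $2$-step dominating set arguments of Krivelevich--Yuster and Chandran et al.\ (which handle the case $k=2$) to general $k$, by first using Lemma \ref{two spanning subgraphs} iteratively to split $G$ into $2^t$ edge-disjoint spanning subgraphs of sufficiently large minimum degree.

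First I would choose $t$ with $2^t\leq k<2^{t+1}$ and apply Lemma \ref{two spanning subgraphs} $t$ times, each time splitting every current piece into two edge-disjoint spanning subgraphs and losing roughly a factor of two in the minimum degree. A short induction shows that the result is $2^t$ pairwise edge-disjoint spanning subgraphs $G_1,\ldots,G_{2^t}$ of $G$, each with minimum degree at least $\delta^{\ast}:=\lfloor(\delta-2^{t+1}+2)/2^t\rfloor$, which is exactly the quantity appearing in the denominator of the stated bound.

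In each $G_j$ I would then produce, along the lines of the Chandran et al.\ argument, a connected $2$-step dominating set $D_j$ of size $O(n/\delta^{\ast})$, and rainbow-color $G_j$ using $|D_j|-1$ colors on a spanning tree of $G_j[D_j]$ together with a handful of extra colors on carefully chosen legs, so that every vertex $v\notin D_j$ has a rainbow path of length at most $2$ into $D_j$ inside $G_j$. The $2^t$ subgraphs are then assigned pairwise disjoint color palettes, so summing the per-subgraph palette sizes (each of size roughly $5n/\delta^{\ast}$ plus a small constant number of leg colors per $S$-slot) yields the announced total $10nk\cdot 2^t/(\delta-2^{t+1}+2)-k-2$.

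Given any $k$-subset $S=\{v_1,\ldots,v_k\}$, I would build the rainbow $S$-tree as follows. Distribute the vertices of $S$ among the $2^t$ subgraphs so that each $G_j$ is responsible for at most $\lceil k/2^t\rceil\leq 2$ of them (this uses $k<2^{t+1}$ crucially). Within each used $G_j$, route its assigned vertices into $D_j$ via the precomputed short rainbow paths, then stitch them together inside the rainbow spanning tree of $G_j[D_j]$. Because distinct subgraphs have disjoint palettes, the pieces built in different $G_j$'s are automatically color-disjoint, so after anchoring them together at a common vertex they form a single rainbow connected subgraph containing $S$, from which the desired spanning tree is extracted.

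The main obstacle is the final bridging step: the dominating sets $D_j$ sit inside different edge-disjoint subgraphs with disjoint color palettes, so merging their partial trees into one rainbow tree---while handling corner cases such as a vertex $v_i\in S$ already lying in some $D_j$, or a single subgraph being asked to process two $S$-vertices---requires careful bookkeeping. This is also where the correction term $-k-2$ and the precise multiplicative constant $10$ originate.
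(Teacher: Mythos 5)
Your overall strategy (edge-disjoint spanning subgraphs of large minimum degree, connected $2$-step dominating sets in each, cheap leg colors plus a rainbow tree on the dominating structure) is the same as the paper's, but your version has a genuine gap exactly at the point you flag as ``the main obstacle,'' and that obstacle is not a matter of bookkeeping --- it is the crux. You keep $2^t$ \emph{separate} dominating sets $D_j$, each with its own rainbow spanning tree in its own palette, and then hope to ``anchor them together at a common vertex.'' There is no reason the trees $T_j\subseteq G_j[D_j]$ share a vertex, nor that $\bigcup_j D_j$ induces a connected subgraph, so the union of your partial trees is in general disconnected and no rainbow $S$-tree results. The paper avoids this entirely: it forms the single set $D=\bigcup_i D_i'$ plus at most $k-1$ connector vertices (cheap, because each $D_i'$ is a $2$-step dominating set of all of $G$, so any two pieces are within distance $5$), and rainbow-colors \emph{one} spanning tree of $G[D]$ with fresh colors. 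That one tree serves every subgraph simultaneously, and the short leg paths from $\overline{D}$ all terminate in the same connected, rainbow-colored hub. Without this step your argument does not close.

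A second, related problem: you decompose into only $2^t$ subgraphs and ask each to handle up to $\lceil k/2^t\rceil\le 2$ vertices of $S$. With the natural coloring (all $D_j$--$N^1(D_j)$ edges of $G_j$ get one color, all $N^1$--$N^2$ edges another), two $S$-vertices routed through the same $G_j$ produce two legs of the same color, so their union is not rainbow; fixing this requires either per-vertex leg colors (destroying your color count) or more subgraphs. The paper's Claim~2 resolves this by splitting $s=k-2^t$ of the $2^t$ subgraphs once more to get exactly $k$ edge-disjoint spanning subgraphs $G_1,\dots,G_k$ --- one per slot of $S$ --- each with its own private pair of colors $\{i,k+i\}$, for $2k$ leg colors total. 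This is also where the precise constants come from: $2s$ subgraphs have minimum degree at least $(\delta-2^{t+2}+2)/2^{t+1}$ and the rest at least $(\delta-2^{t+1}+2)/2^t$, and summing $k$ bounds of the form $5n/(\delta_i+1)-4$ plus $k-1$ connectors plus $2k$ leg colors minus $1$ gives the stated $10nk2^t/(\delta-2^{t+1}+2)-k-2$. You should rework the decomposition to produce $k$ subgraphs and replace the ``stitching'' step with a single connected union $D$ carrying one fresh-colored spanning tree.
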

\begin{proof}

The proof can be divided into the following three steps:

\noindent\textbf{Step 1:} Decompose a graph into $k$ edge-disjoint
spanning subgraphs.

\textbf{Claim 1:}\label{2ell spanning subgraphs}
A graph with minimum degree $\delta$ has $2^{\ell}$ edge-disjoint spanning subgraphs, each with minimum degree at least $(\delta-2^{\ell+1}+2)/2^{\ell}$.

\emph{Proof of Claim 1.} Let $G$ be a graph with minimum degree $\delta$. We apply induction on $\ell$. For $\ell=1$, it follows from Lemma \ref{two spanning subgraphs} that $G$ has two edge-disjoint spanning subgraphs, each with minimum degree at least $\lfloor(\delta-1)/2\rfloor\geq (\delta-2)/2$. Hence the assertion is true for $\ell=1$. Suppose the assertion holds up till $\ell-1$. Now we will prove it for $\ell$. By induction hypothesis, $G$ has $2^{\ell-1}$ edge-disjoint spanning subgraphs $G_1,G_2,\cdots, G_{2^{\ell-1}}$, each with minimum degree at least $(\delta-2^{\ell}+2)/2^{\ell-1}$. By Lemma \ref{two spanning subgraphs} again, each $G_i$ ($1\leq i\leq 2^{\ell-1}$) has two edge-disjoint spanning subgraphs, each with minimum degree at least $(\delta(G_i)-2)/2\geq (\delta-2^{\ell+1}+2)/2^{\ell}$. These $2^{\ell}$ spanning subgraphs of $G$ are our desired edge-disjoint spanning subgraphs.

Let $k$ be a positive integer. Then there exists an integer $t$ such that $2^t\leq k< 2^{t+1}$. Set $s=k-2^t$. Then we have the following claim:

\textbf{Claim 2:}\label{k spanning subgraphs}
A graph with minimum degree $\delta$ has $k$ edge-disjoint spanning subgraphs, $2s$ of which have minimum degree at least $(\delta-2^{t+2}+2)/2^{t+1}$, others have minimum degree at least $(\delta-2^{t+1}+2)/2^t$.

\emph{Proof of Claim 2.}
Let $G$ be a graph with minimum degree $\delta$.
By Claim 1, $G$ has $2^{t}$ edge-disjoint spanning subgraphs, each with minimum degree at least $(\delta-2^{t+1}+2)/2^{t}$. We select $s$ spanning subgraphs arbitrarily and replace each of them with its two edge-disjoint spanning subgraphs by Lemma \ref{two spanning subgraphs}.
Each of these $2s$ spanning subgraphs has minimum degree at least $(\delta-2^{t+2}+2)/2^{t+1}$. Therefore, these $2s$ spanning subgraphs together with the $2^t-s=k-2s$ non-selected spanning subgraphs are the ones we want.

Set $\alpha=(\delta-2^{t+1}+2)/2^t$ and $\beta=(\delta-2^{t+2}+2)/2^{t+1}$.
Using Claim 2, we have $k$ edge-disjoint spanning subgraphs of $G$,
denoted by $G_1,G_2,\cdots,G_{k-2s},G_{k-2s+1},\cdots,G_k$,
where $\delta_i:=\delta(G_i)\geq \alpha$ for $1\leq i\leq k-2s$,
and $\delta_i:=\delta(G_i)\geq \beta$ for $k-2s+1\leq i\leq k$.

\noindent\textbf{Step 2:} Construct connected 2-step dominating
sets.

\textbf{Claim 3:}
For each $G_i$ $(1\leq i\leq k)$, there exists a 2-step dominating set $D_i$ whose size is at most $n/(\delta_i+1)$.

\emph{Proof of Claim 3.}
Note that $G_i$ may be disconnected.
Let $C_{i1},C_{i2},\cdots,C_{ip}$ be all the connected components of $G_i$.
We execute the following process:

~~~~~~~~~~$D_i=\{v_{i1},v_{i2},\cdots,v_{ip}\}$, where $v_{ij}\in C_{ij}$.

~~~~~~~~~~While $N^3(D_i)\neq \emptyset$,

~~~~~~~~~~~~~~~~~~~~pick any $v\in N^3(D_i)$ and $D_i=D_i\cup\{v\}$.

Since the process ends only when $N^3(D_i)=\emptyset$, the final $D_i$ is a $2$-step dominating set of $G_i$.
Let $q$ be the number of iterations. Consider the cardinality of $D_i\bigcup N^1(D_i)$.
Initially, $|D_i\bigcup N^1(D_i)|\geq p(1+\delta_i)$.
In every iteration, we add a new vertex to $D_i$ and $|D_i\bigcup N^1(D_i)|$ increases by at least $1+\delta_i$.
Therefore, when the process ends, $(p+q)(1+\delta_i)\leq |D_i\bigcup N^1(D_i)| \leq n$, thus $|D_i|=p+q\leq n/(1+\delta_i)$.

\textbf{Claim 4:}
For each $D_i$ $(1\leq i\leq k)$, there exists a connected 2-step dominating set $D_i'(\supseteq D_i)$ whose size is at most $5n/(\delta_i+1)-4$.

\emph{Proof of Claim 4.}
Note that $G_i[D_i]$ may be  disconnected. Suppose that $Q_{i1},Q_{i2},\cdots,Q_{ir}$ are the connected components of $G_i[D_i]$.
Since $G$ is connected, every two distinct components $Q_{ij}$, $Q_{ij'}$ must be connected by a path in $G$.
Let $P_{jj'}$ be the shortest path between $Q_{ij}$ and $ Q_{ij'}$ in $G$.
Without loss of generality, we assume that $Q_{i1}$, $Q_{i2}$ are the two components at the minimum distance among all the pairs of components, i.e. $d(Q_{i1},Q_{i2})=\min\{d(Q_{ij},Q_{ij'}):1\leq j\neq j'\leq r\}$.
Then we claim $d(Q_{i1},Q_{i2})\leq5$.
Otherwise, we can find a vertex $v$ on $P_{12}$ such that $d(v,Q_{i1})\geq3$ and $d(v,Q_{i2})\geq3$.
Since $D$ is a 2-step dominating set, $d(v,Q_{ij})\leq2$ for some $Q_{ij}$,
thus $d(Q_{i1},Q_{ij})<d(Q_{i1},Q_{i2})$, a contradiction.
So by adding at most four vertices to $D_i$, we can reduce the number of components at least by 1.
Therefore, we can find a connected 2-step dominating set $D_i'(\supseteq D_i)$ by adding at most $4(r-1)$ vertices.
We have $|D_i'|\leq |D_i|+4(r-1)\leq|D_i|+4(|D_i|-1)=5|D_i|-4\leq 5n/(1+\delta_i)-4$.

Now consider $D'=D'_1\cup \cdots \cup D'_k$. Note that $G[D']$ may be disconnected.
Using the fact that each $D_i'$ ($1\leq i\leq k$) is a connected 2-step dominating set of $G$,
we can add at most $k-1$ vertices to $D'$ to obtain a set $D(\supseteq D')$
such that $G[D]$ is connected.
Moreover, we know $|D|\leq \sum_{i=1}^{k}|D_i'|+k-1\leq (k-2s)(5n/(\alpha +1)-4)+2s(5n/(\beta +1)-4)+k-1< 10nk2^t/(\delta-2^{t+1}+2)-3k-1$.

\noindent\textbf{Step 3:} Give a $k$-rainbow coloring.

For each $D_i$, let $U_i=\overline{D}\cap N^1(D_i)$ and $W_i=\overline{D}\cap N^2(D_i)$.
Then $U_i\cap W_i=\emptyset$ and $U_i\cup W_i=\overline{D}$, since $D_i$ is a 2-step dominating set of $G$.
For $1\leq i\leq k$, all the edges between $D_i$ and $U_i$ belonging to $G_i$ receive the same color $i$,
and  all the edges between $U_i$ and $W_i$ belonging to $G_i$ receive the same color $k+i$.
Choose $T$ as any spanning tree of $G[D]$, and color each edge of $T$ with a distinct fresh color.
All the remaining edges of $G$ are colored with $1$.
Then the total number of colors used is $|D|-1+2k< 10nk2^t/(\delta-2^{t+1}+2)-k-2$.

Next we will show that this edge-coloring makes $G$ $k$-rainbow connected.
Note that every vertex $v\in \overline{D}$ plays $k$ different roles
(i.e. for each $1\leq i\leq k$, $v$ is a vertex in either $U_i$ or $W_i$).
Thus $v$ has $k$ edge-disjoint rainbow paths $P^v_1,P^v_2,\cdots,P^v_k$,
where $P^v_i$ is a $v-D$ path of length 1 (if $v_i\in U_i$) or 2 (if $v_i\in W_i$) in $G_i$.
Moreover, if $P^v_i$ is of length 1, then the color of the edge on $P^v_i$ is $i$;
if $P^v_i$ is of length 2, then the colors of the two edges on $P^v_i$ are $i$ and $k+i$.
Let $S=\{v_1,v_2,\cdots, v_k\}$ be any set of $k$ vertices in $G$.
Without loss of generality, we assume $\{v_1,\cdots,v_p\}\subseteq D$
and $\{v_{p+1},\cdots,v_k\}\subseteq \overline{D}$ for some $0\leq p\leq k$.
For $v_i\in \overline{D}$, we choose the path $P^{v_i}_i$.
Clearly, $\bigcup_{i=p+1}^{k}P^{v_i}_i$ is still rainbow.
Then the paths $\{P^{v_i}_i: p+1\leq i\leq k\}$ together with the tree $T$ in $G[D]$
induce a rainbow $S$-tree.
Thus $rx_k(G)< 10nk2^t/(\delta-2^{t+1}+2)-k-2$.
\end{proof}

\subsection{Upper bounds via connected $(k-1)$-dominating sets and $k$-dominating sets}
In this subsection, we will turn to study the $k$-rainbow index of a graph
by connected $k$-dominating sets and $(k-1)$-dominating sets of the graph.

In the search toward good upper bounds for $rc(G)$ and $rx_3(G)$, an idea
that turned out to be successful more than once is considering the
``strengthened" connected dominating sets. Here we list some known results:

\begin{thm}[\cite{Chandran}]\label{thm5}
$(1)$ If $D$ is a connected two-way dominating set of a connected graph $G$,
then $rc(G)\leq rc(G[D])+3$.

$(2)$ If $D$ is a connected two-way two-step dominating set in a graph $G$, then
$rc(G)\leq rc(G[D])+6$.
\end{thm}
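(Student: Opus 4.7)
The plan is to extend an optimal rainbow coloring of $G[D]$ to the whole of $G$ by introducing a constant number of fresh colors on the edges incident to $\overline{D}$, exploiting the two-way property ($d(v)\geq 2$ for every $v\in\overline{D}$) to guarantee enough ``legs'' at each outside vertex.

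For part (1), I would fix an optimal rainbow coloring of $G[D]$ using $r:=rc(G[D])$ colors, and introduce three new colors $\alpha,\beta,\gamma$. For each $v\in\overline{D}$, since $D$ is dominating, $v$ has at least one foot in $D$, and since $d(v)\geq 2$, $v$ has a second incident edge (either to a second foot in $D$ or to some neighbor in $\overline{D}$). I would select for each $v$ two distinguished edges and color them with $\alpha$ and $\beta$, arranged so that from $v$ one can reach a foot in $D$ along an $\alpha$-edge (possibly via one intermediate vertex of $\overline{D}$) and, independently, along a $\beta$-edge. All remaining edges incident to $\overline{D}$ are colored $\gamma$. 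Rainbow connection is then checked case by case: both endpoints in $D$ reduces to the given coloring; one endpoint in $\overline{D}$ uses an $\alpha$-leg and then a rainbow $G[D]$-path; two endpoints $u,v\in\overline{D}$ are joined by an $\alpha$-leg from $u$ to a foot $f_u$, a rainbow $G[D]$-path from $f_u$ to a foot $f_v$ of $v$ (using only colors $1,\ldots,r$), and a $\beta$-leg from $f_v$ to $v$, which is rainbow since $\alpha\neq\beta$ and both avoid $\{1,\ldots,r\}$. The color $\gamma$ is held in reserve to handle degenerate cases where $u,v$ share a foot or are mutually adjacent and the chosen $\alpha/\beta$ assignment would otherwise collide.

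For part (2), I would apply the same idea to the layered decomposition $\overline{D}=N^1(D)\cup N^2(D)$. Vertices in $N^1(D)$ are handled as above using three colors $\alpha_1,\beta_1,\gamma_1$ for their legs into $D$, while vertices in $N^2(D)$ need one more hop to reach $D$, so three further colors $\alpha_2,\beta_2,\gamma_2$ are assigned to the edges joining $N^2(D)$ to $N^1(D)$. A rainbow path between two vertices of $N^2(D)$ then decomposes as: at most two edges from the outer layer colored from $\{\alpha_2,\beta_2,\gamma_2\}$, at most two edges from the inner layer colored from $\{\alpha_1,\beta_1,\gamma_1\}$, and a rainbow $G[D]$-path in between, using at most $r+6$ colors in total.

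The main obstacle I expect is the bookkeeping needed to certify rainbowness of the constructed paths for \emph{every} pair of vertices simultaneously. The hardest subcase is two vertices $u,v\in N^2(D)$ whose chosen length-two paths into $D$ happen to pass through the same vertex of $N^1(D)$, or whose feet in $D$ coincide; this is precisely why one needs a third ``swap'' color in each layer rather than just two, and verifying that three colors per layer always suffice forms the combinatorial heart of the argument.
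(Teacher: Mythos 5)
This theorem is quoted from Chandran et al.\ without proof, but the technique it rests on is displayed in the paper's own proof of Theorem~\ref{thm8}(2), which generalizes part (1): take a spanning forest of the non-singleton components of $G[\overline{D}]$, bipartition it into classes $X$ and $Y$, give every vertex of $X$ one leg of color $1$, every vertex of $Y$ one leg of color $2$, every $X$--$Y$ edge the color $3$, and every isolated vertex of $G[\overline{D}]$ (which by the two-way condition has at least two legs) one leg of each of the colors $1$ and $2$. Your proposal has the right frame --- constantly many fresh colors on the boundary, legs supplied by domination plus the degree condition --- but the mechanism that actually makes three colors suffice is missing, and the local assignment you describe does not work as stated. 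A vertex $v\in\overline{D}$ with exactly one neighbor in $D$ has only one leg, so it cannot reach $D$ ``along an $\alpha$-edge and, independently, along a $\beta$-edge'': its second escape route must pass through another vertex of $\overline{D}$ and hence uses two edges, which must carry two \emph{different} new colors to stay rainbow. Choosing two distinguished edges per vertex and coloring them $\alpha$ and $\beta$ cannot be made consistent for adjacent vertices without a global rule; the $X/Y$ bipartition is precisely that rule. Relatedly, $\gamma$ is not a reserve color for degenerate collisions --- it is the mandatory color of every $X$--$Y$ edge, so that an $X$-vertex enters $D$ with color set $\{1\}$ or $\{3,2\}$, a $Y$-vertex with $\{2\}$ or $\{3,1\}$, and any two entry routes for distinct endpoints can be chosen with disjoint... rather, jointly rainbow color sets, which then concatenate with a rainbow path inside $G[D]$.

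For part (2) the same objection applies within each layer, and there is a further point your sketch passes over: a vertex of $N^2(D)$ has no legs at all, and in the worst case each endpoint may need two edges to cross from $N^2(D)$ into $N^1(D)$ and two more to cross from $N^1(D)$ into $D$, so a single $u$--$v$ path could carry up to eight newly colored edges while only six new colors are available. The published argument avoids this by guaranteeing that each endpoint enters $D$ using at most two new colors drawn from complementary halves of the palette, again via bipartition-type rules in each layer. As written, your proposal is a correct plan with the decisive combinatorial ingredient still to be supplied.
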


\begin{thm}[\cite{Liu}]\label{thm6}
Let $G$ be a connected graph with minimal degree $\delta(G)\geq 3$. If $D$ is a connected 2-dominating set of $G$, then $rx_{3}(G)\leq rx_{3}(G[D])+4$ and the bound is tight.
\end{thm}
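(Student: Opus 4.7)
The plan is to extend an optimal rainbow $3$-coloring of $G[D]$ to one of $G$ by adding at most four new colors on the edges incident to $\overline{D}$. Concretely, fix an edge coloring $c_0\colon E(G[D])\to\{1,\dots,r\}$ witnessing $r=rx_3(G[D])$, reserve four fresh colors $r+1,r+2,r+3,r+4$ for the edges in $E(G)\setminus E(G[D])$, and design the secondary coloring so that every $3$-subset $S\subseteq V(G)$ is spanned by a rainbow tree. For each $v\in\overline{D}$, the $2$-domination hypothesis yields two distinguished feet $f_1(v),f_2(v)\in N(v)\cap D$, while $\delta(G)\geq 3$ guarantees a third incident edge $e(v)$ at $v$, which is either a further leg or an edge into $\overline{D}$. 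The scheme is to color the two distinguished legs $vf_1(v), vf_2(v)$ together with $e(v)$ using three pairwise distinct fresh colors, and to color every remaining edge incident to $\overline{D}$ arbitrarily with any of the four fresh colors; in this way, at each $v\in\overline{D}$ the set $C(v)\subseteq\{r+1,r+2,r+3,r+4\}$ of fresh colors arising on the three designated edges has size at least $3$.

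The verification then proceeds by case analysis on $|S\cap\overline{D}|\in\{0,1,2,3\}$. The case $|S\cap\overline{D}|=0$ is immediate from $c_0$. For $|S\cap\overline{D}|\geq 1$ one selects, for each $v\in S\cap\overline{D}$, a designated edge at $v$ whose fresh color lies in $C(v)$; this edge reaches a vertex of $D$ in at most two steps (directly if it is a leg, or via the leg of its other endpoint if it is an edge of $G[\overline{D}]$). Projecting $S$ accordingly to a set $S'\subseteq D$ of size at most $|S|$ and appending the rainbow $S'$-tree guaranteed by $c_0$ produces the required rainbow $S$-tree, provided the chosen fresh colors are pairwise distinct. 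When $|S\cap\overline{D}|=3$ this reduces to finding a system of distinct representatives for $C(x_1), C(x_2), C(x_3)$, and since each of these is a subset of size at least $3$ inside a $4$-element universe, Hall's condition holds automatically ($|C(x_i)\cup C(x_j)|\geq 3$ and $|C(x_1)\cup C(x_2)\cup C(x_3)|\geq 3$ are forced).

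The main obstacle is arranging the coloring so that $|C(v)|\geq 3$ holds \emph{simultaneously} for every $v\in\overline{D}$: when $e(v)$ lies inside $G[\overline{D}]$, the single color on $e(v)$ must avoid both the two leg colors at $v$ and those at its other endpoint in $\overline{D}$, which constrains the joint choice of leg-color pairs across $G[\overline{D}]$. A careful greedy argument---processing the $2$-legged vertices of $\overline{D}$ in an appropriate order, or classifying them by the number of available feet and treating the critical $2$-leg case separately---should make the scheme consistent, possibly at the cost of distinguishing subcases according to whether $e(v)$ and $e(u)$ coincide on a shared edge $uv\in E(G[\overline{D}])$. Finally, the tightness of the $+4$ term would be demonstrated by an explicit extremal construction, for instance a graph in which $G[D]$ is a minimal rainbow-$3$-connected structure and $\overline{D}$ is an independent set of $2$-legged vertices arranged so that any $3$-rainbow coloring of $G$ is forced to use at least four colors beyond $rx_3(G[D])$ to separate the legs at triples of $\overline{D}$-vertices.
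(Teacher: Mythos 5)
The paper does not reprove this statement---it is quoted from \cite{Liu}---but its own Theorem \ref{thm8}(2) specialised to $k=3$ is exactly this bound, and its proof is the natural comparison point. Your overall strategy (extend an optimal $3$-rainbow coloring of $G[D]$ by four fresh colors placed on legs and on edges inside $\overline{D}$) matches that proof in spirit, but your specific mechanism has two genuine gaps.

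First, your Hall/SDR step charges only one fresh color to each vertex of $S\cap\overline{D}$, whereas a vertex whose chosen representative edge lies inside $G[\overline{D}]$ escapes to $D$ by a path of length two and therefore consumes a \emph{second} fresh color (the leg of the intermediate vertex), which must also avoid everything already used. Concretely, take $x_1,x_2,x_3\in\overline{D}$, each with exactly two feet whose legs you colored $\{r+1,r+2\}$ and whose third designated edge, colored $r+3$, goes to a neighbour $u\in\overline{D}$ whose legs are again colored $\{r+1,r+2\}$: at most two of the $x_i$ can escape via legs (using $r+1$ and $r+2$), the third must route through $u$, and $u$'s leg then collides with one of the first two. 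Nothing in your scheme excludes this configuration, so $|C(x_i)|\geq 3$ and Hall's condition do not settle the case $|S\cap\overline{D}|=3$. Second, the ``careful greedy argument'' ensuring $|C(v)|\geq 3$ simultaneously for all $v$ is left unproved, and it interacts badly with the first issue: the local fixes that make a shared internal edge $uv$ colorable (e.g.\ giving $u$ and $v$ identical leg-color pairs) are precisely what manufacture the bad triples above. The paper's proof of Theorem \ref{thm8}(2) resolves both problems with a global structure rather than a local greedy one: it splits $\overline{D}$ into the set $Z$ of isolated vertices of $G[\overline{D}]$ (each of which has three legs, colored $1,2,3$) and a bipartition $X\cup Y$ of a spanning forest of the remaining components, colors the $X$-legs from $\{1,2\}$, the $Y$-legs from the shifted set $\{2,3\}$, and every $X$--$Y$ edge with the single color $4$; a short case analysis on how $S$ meets $D,Z,X,Y$ then always produces distinct colors while using at most one length-two path, whose second edge is forced into a color class disjoint from the others. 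To complete your argument you would need either such a global assignment or a genuine proof of your consistency claim together with a repaired accounting for two-edge escapes; the tightness assertion additionally requires the extremal construction you only gesture at.
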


\begin{thm}[\cite{Cai1}]\label{thm7}
$(1)$ If $D$ is a connected three-way dominating set of a connected graph $G$,
 then $rx_{3}(G)\leq rx_{3}(G[D])+6$. Moreover, the bound is tight.

$(2)$ If $D$ is a connected 3-dominating set of a connected graph $G$ with $\delta(G)\geq3$,
 then $rx_{3}(G)\leq rx_{3}(G[D])+3$. Moreover, the bound is tight.
\end{thm}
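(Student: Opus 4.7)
My plan is to prove both parts by extending an optimal 3-rainbow edge-coloring $c_0$ of $G[D]$ (using $rx_3(G[D])$ colors) to the whole graph $G$ using only a small palette of fresh colors on the edges incident to $\overline{D}$. Given any 3-subset $S \subseteq V(G)$, the goal is to build a rainbow $S$-tree as the union of a rainbow $c_0$-tree in $G[D]$ on some auxiliary set $S' \subseteq D$ of size at most three, together with short ``legs'' that join each $v \in S \cap \overline{D}$ to a suitable foot in $D$; the fresh colors on the legs, being disjoint from $c_0$, will then automatically yield a rainbow subgraph containing $S$, from which a rainbow $S$-tree can be extracted.

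For part (2), the 3-domination hypothesis (combined with $\delta(G) \geq 3$) ensures that every $v \in \overline{D}$ has at least three distinct neighbors $w_1(v), w_2(v), w_3(v) \in D$. I would introduce three fresh colors $\alpha_1, \alpha_2, \alpha_3$, color the edge $vw_i(v)$ by $\alpha_i$, and color all remaining edges incident to $\overline{D}$ arbitrarily (say, by $\alpha_1$). For $S=\{v_1,v_2,v_3\}$ and any assignment of distinct indices $\pi(i)\in\{1,2,3\}$ to the vertices in $S\cap\overline{D}$, the legs $\{v_iw_{\pi(i)}(v_i) : v_i\in\overline{D}\}$ carry pairwise distinct fresh colors, and combining them with a rainbow $c_0$-tree on $S':=(S\cap D)\cup\{w_{\pi(i)}(v_i): v_i\in\overline{D}\}$ (which has at most three elements) yields the desired rainbow $S$-tree.

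For part (1), the three-way hypothesis is weaker: each $v\in\overline{D}$ has degree at least three but possibly only one neighbor in $D$. I would choose three distinct neighbors $x_1(v), x_2(v), x_3(v)$ of $v$ and form three edge-disjoint legs $L_i(v)$ from $v$ to $D$ of length at most two: the single edge $vx_i(v)$ if $x_i(v)\in D$, otherwise the path $v\,x_i(v)\,y_i(v)$ for some $y_i(v)\in N(x_i(v))\cap D$. Introducing six fresh colors $\alpha_1,\alpha_2,\alpha_3,\beta_1,\beta_2,\beta_3$, I would color the first edge of $L_i(v)$ by $\alpha_i$ and its second edge (when present) by $\beta_i$. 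The verification then mirrors part (2): for any $S$, any assignment of distinct indices $\pi(i)$ to the vertices in $S\cap\overline{D}$ produces legs whose color sets are pairwise disjoint subsets of $\{\alpha_{\pi(i)},\beta_{\pi(i)}\}$, and splicing with a rainbow $c_0$-tree on the $D$-endpoints of these legs delivers the rainbow $S$-tree.

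The principal technical obstacle is to make the leg coloring in part (1) globally consistent. An edge $uv$ with both endpoints in $\overline{D}$ is incident to legs from both $u$ and $v$, and could naively be forced into two different roles --- two first-edge labels $\alpha_i,\alpha_j$, or a first-edge label on one side and a second-edge label on the other, creating an $\alpha_i/\beta_j$ conflict. Overcoming this requires a coordinated global choice of the labels $(x_1(v), x_2(v), x_3(v))$, e.g.\ by processing the edges of $G[\overline{D}]$ in a fixed order and assigning indices greedily so that every shared edge receives a single coherent color while each vertex of $\overline{D}$ still exhausts all three indices among its legs. The tightness claims are then established by explicit gadget constructions in which fewer than six (respectively three) additional colors on top of $c_0$ are shown to be insufficient to complete a 3-rainbow coloring.
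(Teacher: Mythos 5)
First, note that Theorem~\ref{thm7} is quoted from \cite{Cai1} and is not proved in this paper; the closest in-paper argument is the proof of Theorem~\ref{thm8}, whose part $(1)$ specializes at $k=3$ to part $(2)$ here. Your argument for part $(2)$ is correct and is essentially that proof: three fresh colors on three legs per vertex of $\overline{D}$, an optimal coloring of $G[D]$ on a disjoint palette, distinct leg-indices for the at most three vertices of $S\cap\overline{D}$, and a rainbow tree in $G[D]$ on the feet. (Indeed $\delta(G)\ge 3$ is not even needed for this half, since $3$-domination already supplies three feet.)

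Part $(1)$, however, has a genuine gap exactly where you flag it. A three-way dominating set only guarantees one foot per vertex of $\overline{D}$, so two of the three required paths of a vertex $v$ may have length two and pass through other vertices of $\overline{D}$; the edges of $G[\overline{D}]$ and the legs of $\overline{D}$ are then shared between the path systems of different vertices, and your scheme demands that a single edge simultaneously play the role of ``first edge with label $\alpha_i$'' for one vertex and ``second edge with label $\beta_j$'' (or ``first edge with label $\alpha_j$'', $j\neq i$) for another. Your proposed fix --- processing the edges of $G[\overline{D}]$ greedily so that ``each vertex still exhausts all three indices among its legs'' --- is not an argument: a vertex with a single leg cannot exhaust three indices among its legs, and you give no invariant showing the greedy assignment never gets stuck. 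The known resolution (Chandran et al.\ for $rc$, and the analogous device in the proof of Theorem~\ref{thm8}$(2)$) is structural rather than greedy: take a spanning forest of the non-singleton components of $G[\overline{D}]$, use its proper $2$-colouring $X\cup Y$ to assign \emph{asymmetric} colour sets to the legs of $X$, the legs of $Y$, the legs of the isolated vertices, and the $X$\mbox{--}$Y$ edges, so that each vertex's three escape routes are either direct legs or one crossing edge followed by a leg of the opposite class, and disjointness of the three colour sets is checked by a short case analysis on how $S$ meets $X$, $Y$ and $Z$. Without this (or an equivalent) mechanism the colouring in your part $(1)$ is not well defined, and the tightness claims, which you only assert via unspecified ``gadget constructions'', are likewise not established.
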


Next we will generalize these results to the $k$-rainbow index.

\begin{thm}\label{thm8}
$(1)$ Let $D$ be a connected $k$-dominating set of a connected graph $G$.
Then $rx_k(G)\leq rx_k(G[D])+k$, and thus $rx_k(G)\leq \gamma_k^c(G)+k-1$.

$(2)$ Let $D$ be a connected $(k-1)$-dominating set of a connected graph $G$ with minimum degree at least $k$.
Then $rx_k(G)\leq rx_k(G[D])+k+1$, and thus $rx_k(G)\leq \gamma_{k-1}^c(G)+k$.
\end{thm}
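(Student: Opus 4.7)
The plan is to begin with a $k$-rainbow edge-coloring of $G[D]$, using $rx_k(G[D])$ colors, and extend it to $G$ by introducing $k$ fresh colors in part~$(1)$ and $k+1$ fresh colors in part~$(2)$ on the edges of $G-E(G[D])$. These new colors are distributed on the ``legs'' incident to $\overline{D}$ so that, for every $k$-subset $S\subseteq V(G)$, one can construct a rainbow $S$-tree by attaching short rainbow paths (of length at most $2$) from each $v\in S\cap\overline{D}$ to its ``foot'' in $D$, then joining all foots and $S\cap D$ by a rainbow tree inside $G[D]$. The further bounds on $\gamma_k^c(G)$ and $\gamma_{k-1}^c(G)$ follow at once from the inequality $rx_k(G[D])\leq |D|-1$ (take a spanning tree of $G[D]$) applied to a minimum such $D$.

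For part~$(1)$, use the $k$-dominating property to select, for each $v\in \overline{D}$, $k$ distinct neighbors $u_1^v,\ldots,u_k^v\in D$, and color the edge $vu_j^v$ with fresh color $\alpha_j$ for $j=1,\ldots,k$; color any remaining edges of $G-E(G[D])$ arbitrarily. Given $S=\{v_1,\ldots,v_k\}$ with $\{v_1,\ldots,v_p\}\subseteq D$ and $\{v_{p+1},\ldots,v_k\}\subseteq \overline{D}$, assign to each $v_i$ (for $i>p$) its leg $v_iu_i^{v_i}$; these carry the pairwise distinct colors $\alpha_{p+1},\ldots,\alpha_k$. A rainbow tree in $G[D]$ spanning $\{v_1,\ldots,v_p\}\cup\{u_i^{v_i}:i>p\}$ (padded to $k$ vertices of $D$ if needed) combines with these legs to produce a rainbow subgraph containing $S$, from which the required rainbow $S$-tree is extracted.

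For part~$(2)$, introduce $k+1$ fresh colors $\alpha_1,\ldots,\alpha_{k+1}$. For each $v\in \overline{D}$, color $k-1$ labeled $D$-legs of $v$ with $\alpha_1,\ldots,\alpha_{k-1}$. Since $\delta(G)\geq k$, $v$ has at least one additional incident edge, either to $D$ (call $v$ \emph{type~A}) or to $\overline{D}$ (\emph{type~B}). Color additional $D$-legs with $\alpha_k$, and color every edge of $G[\overline{D}]$ with $\alpha_{k+1}$. A verification via Hall's marriage theorem shows that for any $k$-subset $S$, each $v\in S\cap \overline{D}$ has access to a set of at least $k$ ``first-edge colors'' (the $k-1$ direct-leg colors plus either $\alpha_k$ from an additional leg or $\alpha_{k+1}$ from a length-$2$ rainbow path through $G[\overline{D}]$); this lets us select distinct first-edge colors for the vertices of $S\cap \overline{D}$, which, together with a rainbow $G[D]$-tree through the chosen foots, yield the desired rainbow $S$-tree.

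The main obstacle is in part~$(2)$ when $|S\cap \overline{D}|=k$, every vertex of $S\cap \overline{D}$ is of type~B, and $S\cap \overline{D}$ is an independent set in $G[\overline{D}]$. Since $k-1$ direct legs and one length-$2$ path together consume $k+1$ fresh colors, exactly one length-$2$ path is affordable; its two colors must be the two ``extra'' colors $\alpha_k$ and $\alpha_{k+1}$. Ensuring that the second edge of this length-$2$ path can indeed carry the color $\alpha_k$ (coming from a type~A leg of the middle vertex) requires careful design of the coloring, possibly adjusting which neighbor of $v\in \overline{D}$ receives which label so that at least one type~A middle vertex is available, or else rerouting some portion of the tree through $G[D]$ using its disjoint $rx_k(G[D])$-coloring.
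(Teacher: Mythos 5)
Part (1) of your proposal is correct and is essentially identical to the paper's argument: give each $v\in\overline{D}$ one leg of each of the $k$ fresh colors, rainbow-color $G[D]$ with $rx_k(G[D])$ further colors, and for a set $S$ assign the legs of the vertices in $S\cap\overline{D}$ pairwise distinct fresh colors before joining their feet inside $G[D]$. The reductions to $\gamma_k^c(G)+k-1$ and $\gamma_{k-1}^c(G)+k$ via $rx_k(G[D^*])\leq|D^*|-1$ are also as in the paper.

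Part (2), however, has a genuine gap, and it is exactly the case you flag at the end. Under your coloring (every $v\in\overline{D}$ gets legs colored $\alpha_1,\dots,\alpha_{k-1}$, surplus legs colored $\alpha_k$, all of $G[\overline{D}]$ colored $\alpha_{k+1}$), consider a graph in which every vertex of $\overline{D}$ has exactly $k-1$ neighbors in $D$ and exactly one neighbor in $\overline{D}$, with $G[\overline{D}]$ a perfect matching, and take $S$ to be $k$ pairwise non-adjacent vertices of $\overline{D}$. Each $v\in S$ sees only the colors $\alpha_1,\dots,\alpha_{k-1},\alpha_{k+1}$ on its incident edges, so some $v_j\in S$ must enter the tree via its matching edge $v_jw_j$ of color $\alpha_{k+1}$; but every edge leaving $w_j$ other than $v_jw_j$ carries one of $\alpha_1,\dots,\alpha_{k-1}$, all of which are already used by the other $k-1$ vertices of $S$. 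So no rainbow $S$-tree exists: your coloring fails, and "adjusting labels so that a type-A middle vertex is available" cannot help, since there may be no type-A vertices at all. The missing idea, which the paper supplies, is to break the symmetry among the vertices of $\overline{D}$: take a spanning forest of the non-singleton components of $G[\overline{D}]$, let $(X,Y)$ be its bipartition, color the legs of $X$-vertices with $1,\dots,k-1$ but the legs of $Y$-vertices with the \emph{shifted} palette $2,\dots,k$, color all $X$\mbox{--}$Y$ edges with $k+1$, and give the isolated vertices of $G[\overline{D}]$ (which have $k$ legs since $\delta(G)\geq k$) legs of all $k$ colors. The shift guarantees that an $X$-vertex needing the missing color $k$ can route $X\to Y\to D$ collecting $k+1$ then $k$, and a $Y$-vertex needing the missing color $1$ can route $Y\to X\to D$ collecting $k+1$ then $1$; the spanning forest guarantees each non-isolated vertex has a neighbor on the other side. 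Without some such asymmetric device, the pigeonhole obstruction above is fatal.
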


\begin{proof}
(1) Since $D$ is a connected $k$-dominating set, every vertex $v$ in
$\overline{D}$ has at least $k$ legs, denoted by $e^v_1, e^v_2,\cdots, e^v_k$.
For each $i$ $(1\leq i\leq k)$, we color the edge $e^v_i$ with the color $i$.
Let $r=rx_k(G[D])$.
Then we can color the edges in $G[D]$ with $r$ different colors from
$\{k+1,k+2,\cdots, k+r\}$ such that for every $k$ vertices in $D$,
there exists a rainbow tree in $G[D]$ connecting them.
If there remain uncolored edges in $G$, we color them with the color $1$.

Next we will show that this edge-coloring $c_1$ is a $k$-rainbow coloring of $G$.
Let $S=\{v_1,v_2,\cdots,v_k\}$ be any set of $k$ vertices in $G$.
Without loss of generality, we assume that
$\{v_1,\cdots, v_p\}\subseteq D$ and $\{v_{p+1},\cdots, v_k\}\subseteq \overline{D}$ for some $p$ ($0\leq p\leq k$).
For each $v_i\in \overline{D}$ $(p+1\leq i\leq k)$, let $f_i=v_iu_i$ be the leg of $v_i$ such that $c_1(f_i)=i$.
Then the edges $\{f_{p+1},\cdots,f_{k}\}$ together with the rainbow tree connecting the vertices
$\{v_1,\cdots,v_p,u_{p+1},\cdots,u_k\}$ in $G[D]$ induces a rainbow $S$-tree.
Thus $rx_k(G)\leq rx_k(G[D])+k$.
If we take a minimum connected $k$-dominating set $D^*$ in $G$, then $rx_k(G)\leq rx_k(G[D^*])+k\leq (|D^*|-1)+k=\gamma_k^c(G)+k-1$.

(2) Suppose $H$ is the subgraph of $G$ induced by $\overline{D}$.
Let $Z$ be the set of isolated vertices in $H$.
In every non-singleton connected component of $H$, we choose a spanning tree.
This gives a spanning forest on $\overline{D}\setminus Z$.
Choose $X$ and $Y$ as one bipartition defined by this forest.
Since $G$ has minimum degree at least $k$,
every vertex $v$ in $Z$ has at least $k$ legs, denoted by $e^v_1,\cdots, e^v_k$.
For each $i$ ($1 \leq i\leq k$), color $e^v_i$ with the color $i$.
Since $D$ is a $(k-1)$-dominating set,
every vertex in $X$ has at least $k-1$ legs, denoted by $f^v_1,\cdots, f^v_{k-1}$.
For each $i$ ($1 \leq i\leq k-1$), color $f^v_i$ with the color $i$ .
Similarly, every vertex in $Y$ has at least $k-1$ legs, denoted by $g^v_1,\cdots, g^v_{k-1}$.
For each $i$ ($1 \leq i\leq k-1$), color $g^v_i$ with the color $i+1$.
Further, we give each edge between $X$ and $Y$ the color $k+1$.
Let $r=rx_k(G[D])$.
Then we can color the edges in $G[D]$ with $r$ different colors from
$\{k+2,k+3,\cdots, k+r+1\}$ such that for every $k$ vertices in $D$,
there exists a rainbow tree in $G[D]$ connecting them.
If there remain uncolored edges in $G$, we color them with 1.

Next we will show that this edge-coloring $c_2$ is a $k$-rainbow coloring of $G$.
Let $S=\{v_1,v_2,\cdots,v_k\}$ be any set of $k$ vertices in $G$. Without loss of generality, we assume that
$\{v_1,\cdots,v_d\} \subseteq D$, $\{v_{d+1},\cdots,v_{d+z}\}\subseteq Z$,
$\{v_{d+z+1},\cdots,v_{d+z+x}\}\subseteq X$ and \\$\{v_{d+z+x+1},\cdots,v_{d+z+x+y}\}\subseteq Y$, where $d,z,x,y$ are non-negative integers such that $d+z+x+y=k$.

Case 1: $d+z+x=k$ (i.e. $S\cap Y=\emptyset$). Then for each $d+1\leq i\leq k-1$,
let $f_i=v_iu_i$ be the leg of $v_i$ such that $c_2(f_i)=i$.
For $i=k$, if $v_k\in Z$, then let $f_k=v_ku_k$ be the leg of $v_k$ such that $c_2(f_k)=k$;
if $v_k\in X$, then let $P_k=v_kw_ku_k$ $(w_k\in Y, u_k\in D)$ be the path such that $c_2(w_ku_k)=k,c_2(v_kw_k)=k+1$.

Case 2: $d+z+x=0$ (i.e. $S\subseteq Y$). Then for each $1\leq i\leq k-1$,
let $f_i=v_iu_i$ be the leg of $v_i$ such that $c_2(f_i)=i+1$.
For $i=k$, let $P_k=v_kw_ku_k(w_k\in X, u_k\in D)$ be the path such that $c_2(w_ku_k)=1,c_2(v_kw_k)=k+1$.

Case 3: $1\leq d+z+x\leq k-1$ (i.e. $S\cap (Z\cup X\cup D)\neq\emptyset$ and $S\cap Y\neq\emptyset$).
Then for each $d+1\leq i\leq k$,
let $f_i=v_iu_i$ be the leg of $v_i$ such that $c_2(f_i)=i$.

Then the edges $\{f_{d+1},\cdots,f_{k} (or\ P_k)\}$ together with the rainbow tree connecting the vertices
$\{v_1,\cdots,v_d,u_{d+1},\cdots,u_k\}$ in $G[D]$ induce a rainbow $S$-tree.
Thus $rx_k(G)\leq rx_k(G[D])+k+1$.
If we take a minimum connected $(k-1)$-dominating set $D^*$ in $G$, then $rx_k(G)\leq rx_k(G[D^*])+k+1\leq (|D^*|-1)+k+1=\gamma_{k-1}^c(G)+k$.
\end{proof}

In \cite{Caro2}, Caro, West and Yuster presented the following result for the connected $k$-domination number:

\begin{lem}[\cite{Caro2}]
Let $k$ and $\delta$ be positive integers satisfying $k<\sqrt{ln \delta}$
and let $G$ be a graph on $n$ vertices with minimum degree at least $\delta$.
Then $\gamma_k^c(G)\leq n\frac{ln \delta}{\delta}(1+o_\delta(1))$.
\end{lem}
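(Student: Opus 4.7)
The natural approach is the probabilistic method in two phases: first construct a random $k$-dominating set whose expected size is asymptotically $n\ln\delta/\delta$, and then upgrade it into a connected $k$-dominating set while adding only lower-order many vertices.

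\emph{Phase 1 (random $k$-dominating set).} Place each vertex of $G$ into a set $A$ independently with probability $p=\ln\delta/\delta$, so $\mathbb{E}|A|=np=n\ln\delta/\delta$. Let $B=\{v\notin A:|N(v)\cap A|<k\}$; then $D_0:=A\cup B$ is automatically a $k$-dominating set. For every vertex $v$ with $d(v)\geq\delta$,
\[
\Pr[v\in B]\leq\Pr[\mathrm{Bin}(\delta,p)<k]\leq k\binom{\delta}{k-1}p^{k-1}(1-p)^{\delta-k+1}.
\]
Plugging in $\delta p=\ln\delta$ and using Stirling on $(k-1)!$, the hypothesis $k<\sqrt{\ln\delta}$ is exactly what forces this bound to be $o(p)$. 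Hence $\mathbb{E}|B|=o(np)$, and $\mathbb{E}|D_0|=(1+o_\delta(1))\,n\ln\delta/\delta$. Fix a realization meeting this bound.

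\emph{Phase 2 (connectivity).} Let $c$ be the number of components of $G[D_0]$. Pick one representative from each component and greedily merge the components by adjoining shortest paths of $G$, moving all intermediate vertices into the set; this produces a set $D\supseteq D_0$ that is connected and still $k$-dominating (only vertices have been added). To control the overhead I would use the diameter estimate $\mathrm{diam}(G)\leq 3n/(\delta+1)+3$ of Chandran et al., together with a concentration argument showing that with positive probability $G[D_0]$ already has few components (every component contains at least one vertex of $A$, and $|A|$ is tightly concentrated around $np$). The expected overhead $c\cdot(\mathrm{diam}(G)-1)$ then works out to $o(n\ln\delta/\delta)$, giving $\gamma_k^c(G)\leq|D|\leq(1+o_\delta(1))\,n\ln\delta/\delta$.

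\emph{Main obstacle.} The delicate part is Phase~2: the sharp multiplicative factor $(1+o_\delta(1))$ rules out the naive ``attach a Steiner tree through $V\setminus D_0$'' reduction from $k$-dominating to connected $k$-dominating, which typically costs a factor of roughly $3$ and would be fatal here. One therefore has to control, rather sharply, both the number of components of $G[D_0]$ and the total length of the merging paths so that the connectivity repair contributes only lower-order terms. The condition $k<\sqrt{\ln\delta}$ is precisely the borderline that keeps the binomial tail $\Pr[\mathrm{Bin}(\delta,p)<k]$ negligible next to $p$ in Phase~1 and, together with the minimum-degree assumption, lets the component count in Phase~2 stay $o(\delta/\ln\delta)$.
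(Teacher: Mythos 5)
The paper does not prove this lemma at all --- it is quoted verbatim from Caro, West and Yuster --- so there is no in-paper argument to compare yours against; I can only assess the sketch on its own terms, and it has two genuine gaps, one in each phase.

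\textbf{Phase 1.} With $p=\ln\delta/\delta$ your tail bound is \emph{not} $o(p)$. The dominant term of $\Pr[\mathrm{Bin}(\delta,p)<k]$ is $\binom{\delta}{k-1}p^{k-1}(1-p)^{\delta-k+1}\approx \frac{(\ln\delta)^{k-1}}{(k-1)!}\cdot\frac{1}{\delta}$, and dividing by $p=\ln\delta/\delta$ leaves $\frac{(\ln\delta)^{k-2}}{(k-1)!}$. This equals $1$ for $k=2$ and tends to infinity for every $k\ge 3$ (even when $k$ is as large as $\sqrt{\ln\delta}$, since then $(k-2)\ln\ln\delta-\ln((k-1)!)\approx\tfrac{k}{2}\ln\ln\delta\to\infty$). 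So $\mathbb{E}|B|$ is at least comparable to $\mathbb{E}|A|$, and $\mathbb{E}|D_0|$ is \emph{not} $(1+o_\delta(1))\,n\ln\delta/\delta$; the hypothesis $k<\sqrt{\ln\delta}$ does not rescue this computation. The standard repair is to take $p=\frac{\ln\delta+k\ln\ln\delta}{\delta}$: then $(1-p)^{\delta}\le \frac{1}{\delta(\ln\delta)^{k}}$, while $(\delta p)^{k-1}\le(\ln\delta)^{k-1}e^{k^2\ln\ln\delta/\ln\delta}\le(\ln\delta)^{k}$ precisely because $k^2<\ln\delta$, so the tail is $O(1/\delta)=o(p)$; and $np=n\frac{\ln\delta}{\delta}\bigl(1+\frac{k\ln\ln\delta}{\ln\delta}\bigr)=n\frac{\ln\delta}{\delta}(1+o_\delta(1))$, again using $k<\sqrt{\ln\delta}$. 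That is where the hypothesis actually enters --- not where you placed it.

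\textbf{Phase 2.} This part is a plan, not a proof, and its one concrete assertion is false: a component of $G[D_0]$ need not meet $A$, since a vertex of $B$ may have no neighbour in $D_0$ at all and then forms a singleton component. Concentration of $|A|$ gives no control on the number $c$ of components of $G[D_0]$, which a priori can be of order $|D_0|$. Because $D_0$ is dominating, two closest components are at distance at most $3$ and one can connect everything by adding at most $2(c-1)$ vertices; but with $c=\Theta(|D_0|)$ this is exactly the multiplicative factor-$3$ loss you correctly identify as fatal, and the sketch supplies no mechanism for making $c=o(\delta/\ln\delta)$ (your appeal to $\mathrm{diam}(G)\le 3n/(\delta+1)+3$ makes matters worse, since $c$ merges at that cost would require $c=o(\ln\delta)$). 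Controlling the connectivity overhead is the actual content of the Caro--West--Yuster theorem, and it remains unproved here. In short: the two-phase strategy is the right general shape, but neither phase is established as written.
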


Combining this lemma with Theorem \ref{thm8}, we come to the following conclusion:

\begin{thm}
Let $k$ and $\delta$ be positive integers satisfying $k<\sqrt{ln \delta}$
and let $G$ be a graph on $n$ vertices with minimum degree at least $\delta$.
Then $rx_k(G)\leq n\frac{ln \delta}{\delta}(1+o_\delta(1))$.
\end{thm}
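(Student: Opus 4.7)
The plan is to obtain the bound as a direct corollary of the Caro--West--Yuster lemma cited immediately above, using part (1) of Theorem \ref{thm8} as the bridge. Theorem \ref{thm8}(1) gives, for every connected graph $G$, the reduction
\[
rx_k(G) \leq \gamma_k^c(G) + k - 1,
\]
and under the hypothesis $k < \sqrt{\ln \delta}$ the preceding lemma supplies
\[
\gamma_k^c(G) \leq n\,\frac{\ln \delta}{\delta}(1+o_\delta(1)).
\]
Chaining the two inequalities produces
\[
rx_k(G) \leq n\,\frac{\ln \delta}{\delta}(1+o_\delta(1)) + (k-1).
\]

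The only thing to check is that the additive $k-1$ may be swallowed by the $o_\delta(1)$ factor in the main term. Since $G$ has minimum degree at least $\delta$, we have $n \geq \delta+1$, so the main term satisfies $n\,\frac{\ln \delta}{\delta} \geq \ln \delta$. Together with the hypothesis $k-1 < \sqrt{\ln \delta}$, this gives
\[
\frac{k-1}{\,n\,\ln \delta / \delta\,} \;\leq\; \frac{\sqrt{\ln \delta}}{\ln \delta} \;=\; \frac{1}{\sqrt{\ln \delta}} \;=\; o_\delta(1),
\]
so $(k-1)$ is of strictly smaller order than $n\ln \delta / \delta$ and can be absorbed into the $o_\delta(1)$ factor.

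I do not anticipate any real obstacle: the theorem is essentially the concatenation of an existing domination-number estimate with the just-proved tool $rx_k(G) \leq \gamma_k^c(G) + k - 1$. The hypothesis $k < \sqrt{\ln \delta}$ in the statement is precisely the one needed to invoke the Caro--West--Yuster bound; the same hypothesis, a fortiori, gives the asymptotic tidying that eliminates the additive constant. Thus the whole proof amounts to writing down these two inequalities and verifying the absorption above.
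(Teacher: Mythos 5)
Your proposal is correct and matches the paper's own (essentially one-line) derivation: the paper likewise obtains this theorem by combining the Caro--West--Yuster bound on $\gamma_k^c(G)$ with Theorem \ref{thm8}(1). Your explicit verification that the additive $k-1$ is absorbed into the $o_\delta(1)$ term (via $n\ln\delta/\delta \geq \ln\delta$ and $k-1 < \sqrt{\ln\delta}$) is a detail the paper leaves implicit, and it is correct.
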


\end{document}